%%%%%%%%%%%%%%%%%%%%%%%%%%%%%%%%%%%%%%%%%%%%%%%%%%%%%%%%%%%%%%%
\documentclass[12pt,reqno]{amsart}

\setlength{\textheight}{23cm}
\setlength{\textwidth}{16cm}
\setlength{\topmargin}{-0.8cm}
\setlength{\parskip}{0.3\baselineskip}
\hoffset=-1.4cm

\newtheorem{theorem}{Theorem}[section]
\newtheorem{proposition}[theorem]{Proposition}
\newtheorem{lemma}[theorem]{Lemma}
\newtheorem{corollary}[theorem]{Corollary}

\theoremstyle{definition}
\newtheorem{definition}[theorem]{Definition}

\numberwithin{equation}{section}

\begin{document}

\baselineskip=15.5pt

\title[On the generalized $\text{SO}(2n,{\mathbb C})$--opers]{On the generalized
$\text{SO}(2n,{\mathbb C})$--opers}

\author[I. Biswas]{Indranil Biswas$^{^*}$}

\address{School of Mathematics, Tata Institute of Fundamental
Research, Homi Bhabha Road, Mumbai 400005, India}

\email{indranil@math.tifr.res.in}

\author[L. P. Schaposnik]{Laura P. Schaposnik}

\address{Department of Mathematics, Statistics, and Computer Science, University of
Illinois at Chicago, 851 S Morgan St, Chicago, IL 60607, United States}

\email{schapos@uic.edu}

\author[ M. Yang]{Mengxue Yang}

\address{Department of Mathematics, Statistics, and Computer Science, University of
Illinois at Chicago, 851 S Morgan St, Chicago, IL 60607, United States}

\email{myang59@uic.edu}

\subjclass[2010]{14H60, 31A35, 33C80, 53C07}

\keywords{Oper, orthogonal group, differential operator, connection.}

\date{}

\begin{abstract}
Since their introduction by Beilinson-Drinfeld \cite{BD,Opers1}, opers have seen several generalizations. In 
\cite{BSY} a higher rank analog was studied, named {generalized $B$-opers}, where the successive quotients of the 
oper filtration are allowed to have higher rank and the underlying holomorphic vector bundle is endowed with a 
bilinear form which is compatible with both the filtration and the oper connection. Since the definition didn't 
encompass the even orthogonal groups, we dedicate this paper to study generalized $B$-opers whose structure group 
is ${\rm SO}(2n,\mathbb{C})$, and show their close relationship with geometric structures on a Riemann surface.
\end{abstract}

\maketitle

\section{Introduction}\label{intro.}

Motivated by the works of Drinfeld and Sokolov \cite{DS1,DS2}, Beilinson and Drinfeld introduced opers, in 
\cite{BD,Opers1}, for a semisimple complex Lie group $G$. A $G$--oper on a compact Riemann surface $X$ is
\begin{itemize}
\item a holomorphic principal $G$--bundle $P$ on $X$ equipped with a holomorphic connection $\nabla$, and

\item a holomorphic reduction of structure group of $P$ to a Borel subgroup of $G$,
\end{itemize}
such that the reduction satisfies the Griffiths transversality condition with respect to the connection $\nabla$
and the second fundamental form of $\nabla$ for the reduction satisfies
certain nondegeneracy conditions.

In recent years, different extensions of the above objects have been introduced and studied -- examples are 
$\mathfrak{g}$-opers (a $\mathfrak{g}$-oper is a $\text{Aut}(\mathfrak{g})$-oper \cite{Opers1}) and Miura opers 
\cite{MR2146349}, as well as $(G,P)$-opers \cite{GP}. Very recently, the authors introduced the notion of a {\it 
generalized $B$-oper} in \cite{BSY}. The definition was inspired by \cite{Bi}, where a particular class of opers 
was studied for which $\text{rank}(E)\,=\, nr$ and the rank of each successive quotient $E_i/E_{i-1}$ is $r$ (the 
above two conditions remain unchanged). In \cite{BSY}, the authors incorporated a non-degenerate bilinear form $B$ 
and required the (not necessarily full) filtration and the connection appearing in a $G$--oper to be compatible 
with it. However, the work done in \cite{BSY} did not apply to opers with structure group ${\rm SO}(2n,\mathbb{C})$.
The case of ${\rm SO}(2n,\mathbb{C})$ is subtler than ${\rm SO}(2n+1,\mathbb{C})$ and ${\rm Sp}(2n,\mathbb{C})$.
We dedicate the present paper to the study the ${\rm SO}(2n,\mathbb{C})$ case.

We begin our work by considering filtered $\text{SO}(2n,{\mathbb C})$--bundles with connections in Section 
\ref{filtered}, leading to the introduction and study of a \textit{generalized} $\text{SO}(2n, {\mathbb 
C})$--\textit{quasioper}: a quadruple $(E,\, B_0,\, {\mathcal F}_{\bullet},\, D)$, where $(E,\, B_0,\, {\mathcal 
F}_{\bullet})$ is a filtered $\text{SO}(2n, {\mathbb C})$--bundle over
a compact Riemann surface $X$, and $D$ is a holomorphic connection on $(E,\, 
B_0,\, {\mathcal F}_{\bullet})$ (see Definition \ref{def3}). Let $2m+1$ be the length of the filtration.
Then $r\,=\, n/(m+1)$ is an integer.

The quasiopers have naturally induced isomorphic 
dual quasiopers, as shown in Proposition \ref{dual}. The properties of generalized $\text{SO}(2n, {\mathbb 
C})$--quasiopers are studied in Section \ref{se3}, in the spirit of \cite{BSY} and in relation to the jet 
bundles.

The main goal of the paper is to introduce $\text{SO}(2n, {\mathbb C})$--\textit{opers}, and to show that 
generalized $\text{SO}(2n, {\mathbb C})$--opers are closely related to projective structures on the base 
Riemann surface $X$, and this is done in Section \ref{opers}. After constructing and studying $\text{SO}(2n, 
{\mathbb C})$--opers through $\text{SO}(2n, {\mathbb C})$--quasiopers, we consider their relation 
to geometric structures on $X$.

Let $X$ be a compact connected Riemann surfaces of genus at least two.
Fix positive integers $n$ and $m$ such that $r\,:=\,n/(m+1)$ is an integer. Let
$$
{\mathbb O}_X(n,m)
$$
denote the space of all isomorphism classes of generalized ${\rm SO}(2n,{\mathbb C})$--opers on $X$
of filtration length $2m+1$ (see Definition \ref{def1} and Definition \ref{def4}).
Let ${\mathcal C}_X$ be the space of all isomorphism classes of holomorphic 
principal $\text{SO}(r, {\mathbb C})$--bundles on $X$ equipped with a holomorphic
connection, and let ${\mathfrak P}(X)$ be the space of all projective structures on the Riemann surface $X$.

We prove the following (see Theorem \ref{thm1}):

\begin{theorem}\label{thi}
If the integer $r$ is odd, then 
there is a canonical bijection between ${\mathbb O}_X(n,m)$ and the Cartesian product
$$
{\mathcal C}_X\times {\mathfrak P}(X)\times \left(H^0(X,\, K^{\otimes (m+1)}_X)\oplus
\left(\bigoplus_{i=2}^m H^0(X,\, K^{\otimes 2i}_X)\right)\right)\times J(X)_2\ ,
$$
for $J(X)_2$ the group of holomorphic line bundles on $X$ of order two, and $K_X$ the
holomorphic cotangent bundle of $X$. 

If $r$ is even, then 
there is a canonical bijection between ${\mathbb O}_X(n,m)$ and the Cartesian product
$$
{\mathcal C}_X\times {\mathfrak P}(X)\times \left(H^0(X,\, K^{\otimes (m+1)}_X)\oplus
\left(\bigoplus_{i=2}^m H^0(X,\, K^{\otimes 2i}_X)\right)\right)\, .
$$
\end{theorem}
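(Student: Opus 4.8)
The plan is to realize every generalized $\text{SO}(2n,\mathbb{C})$-oper as a tensor product of a rank-$r$ ``multiplicity'' bundle carrying the $\text{SO}(r,\mathbb{C})$-structure with a rank-$(2m+2)$ ``scalar'' even-orthogonal oper carrying the projective structure and the differentials, and then to account for a residual twisting by order-two line bundles whose relevance is governed by the parity of $r$.

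First I would exploit the oper axioms of Definition \ref{def4}. Griffiths transversality together with the nondegeneracy of the second fundamental forms makes each map $\mathcal{F}_i/\mathcal{F}_{i-1}\to(\mathcal{F}_{i+1}/\mathcal{F}_i)\otimes K_X$ an isomorphism away from the centre of the filtration, so that all the rank-$r$ graded pieces are identified with one bundle $W$ twisted by the integral powers $K_X^{m},K_X^{m-1},\ldots,K_X^{-m}$, while the relation $2n=(2m+2)r$ forces the central graded piece to have rank $2r$ and to be isomorphic to $W^{\oplus2}$ (the weight-zero multiplicity, coming from the decomposition of the standard representation of $\mathfrak{so}(2m+2)$ under a principal $\mathfrak{sl}_2$ as $V_{2m}\oplus V_0$). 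The bilinear form $B_0$, being compatible with $\mathcal{F}_\bullet$, pairs the weight-$j$ piece with the weight-$(-j)$ piece and hence descends to a nondegenerate symmetric form on $W$, while $D$ induces a holomorphic connection on $W$ preserving it; this produces the class in $\mathcal{C}_X$. The self-duality needed here is exactly the content of Proposition \ref{dual}.

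Writing $E\cong W\otimes V$ with $V$ the scalar rank-$(2m+2)$ factor, the induced data is a generalized $\text{SO}(2m+2,\mathbb{C})$-oper of the minimal (length $2m+1$) type, and I would classify these via the jet-bundle model of Section \ref{se3}. Since $X$ has genus at least two, the space $\mathfrak{P}(X)$ of projective structures is a nonempty affine space over $H^0(X,K_X^{\otimes2})$, and the principal $\mathfrak{sl}_2$ inside $\mathfrak{so}(2m+2)$ identifies the degree-two (quadratic-differential) part of the oper connection with a point of $\mathfrak{P}(X)$. Having fixed the associated graded and the projective structure, the remaining connections form an affine space; the difference of two of them is a global section of a bundle which, for the general linear normal form, would decompose with the degrees $2,3,\ldots,2m+2$, but compatibility with $B_0$ (the orthogonal, rather than merely linear, symmetry) restricts these to the Casimir degrees of type $D_{m+1}$, namely $2,4,\ldots,2m$ together with the Pfaffian degree $m+1$. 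The degree-two piece is absorbed into $\mathfrak{P}(X)$, leaving exactly the factor $H^0(X,K_X^{\otimes(m+1)})\oplus\bigoplus_{i=2}^m H^0(X,K_X^{\otimes2i})$.

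The remaining point, which I expect to be the main obstacle, is the order-two ambiguity and its dependence on the parity of $r$. The group $J(X)_2$ acts on $\mathbb{O}_X(n,m)$ by tensoring the oper bundle with a line bundle $\eta$ satisfying $\eta^{\otimes2}\cong\mathcal{O}_X$, which preserves the symmetric form and, because $2n$ is even, preserves the orientation; this twist changes the multiplicity bundle by $W\mapsto W\otimes\eta$. Here the parity enters, since $\det(W\otimes\eta)\cong\eta^{\otimes r}$: when $r$ is even the twisted bundle is again an $\text{SO}(r,\mathbb{C})$-bundle and the twist is absorbed into the $\mathcal{C}_X$ factor, giving no independent parameter, whereas when $r$ is odd one has $\eta^{\otimes r}\cong\eta$, the twisted bundle leaves the identity component, and the twist cannot be absorbed, so it must be recorded as a genuinely independent factor $J(X)_2$. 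The delicate part is to verify that for odd $r$ the extraction of a canonical $W\in\mathcal{C}_X$ together with a canonical element of $J(X)_2$ is well defined (rather than only well defined up to this twist), and, conversely, that the reconstruction of $(E,\,B_0,\,\mathcal{F}_\bullet,\,D)$ from the quadruple of data — building the graded bundle, equipping it with the orthogonal form and the oper connection determined by the projective structure and the differentials, and twisting by the chosen order-two bundle — lands back in $\mathbb{O}_X(n,m)$ and is inverse to the extraction. Assembling these maps then gives the asserted canonical bijection in both parities.
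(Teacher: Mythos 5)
Your structural picture agrees with the paper's --- graded pieces $W\otimes K_X^{j}$, $j=m,\dots,-m$, with the weight-zero piece doubled, the parity mechanism $\det(W\otimes\eta)\cong\eta^{\otimes r}$ deciding whether the $J(X)_2$-twist can be absorbed into $\mathcal{C}_X$, and the correct target degrees $2,4,\dots,2m$ and $m+1$ --- but the two steps that carry the whole weight of the theorem are asserted rather than proved. The first is the factorization $E\cong W\otimes V$ and, inside it, the claim that ``$D$ induces a holomorphic connection on $W$.'' The oper connection strictly shifts the filtration: $D(F_i)\subset F_{i+1}\otimes K_X$ with the second fundamental forms isomorphisms, so $D$ induces no connection on any graded piece, and extracting a flat ${\rm SO}(r,\mathbb{C})$-bundle from $(E,\,B_0,\,\mathcal{F}_\bullet,\,D)$ is exactly the hard part. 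The paper does this not through a tensor factorization but through the canonical orthogonal direct sum $E=\mathbb{F}\oplus\mathcal{S}$ of Lemma \ref{lem1}, with $\mathbb{F}=\widehat{D}^{2m}_E(F_1)$ identified with $J^{2m}(Q)$ by Corollary \ref{cor1}, the induced connections of Corollary \ref{cor2}, and then the classification of generalized $B$-opers, \cite[Theorem 4.6]{BSY}, applied to $\mathbb{F}':=\mathbb{F}\otimes\det\mathbb{F}$; it is that cited theorem which produces $(V,\,B_V,\,D_V)\in\mathcal{C}_X$. Moreover a tensor factorization, even granted, is only defined up to $(W,V)\mapsto(W\otimes\eta,\,V\otimes\eta)$ with $\eta\in J(X)_2$ --- the ambiguity you flag as ``the delicate part'' but leave unresolved --- whereas the paper's $\mathbb{F}$, $\mathcal{S}$ and $\mathcal{L}:=\det\mathbb{F}$ in \eqref{dt} are canonical, and this canonicity is precisely what makes the odd-$r$ bijection well defined.

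The second gap is the classification of the rank-$(2m+2)$ ``scalar'' factor. As a statement about the objects of Definition \ref{def4} it is the $r=1$ instance of the very theorem you are proving, so invoking it is circular; as an appeal to the Beilinson--Drinfeld affine-space theorem for type $D_{m+1}$ it requires identifying their Borel-reduction opers with the bundles satisfying conditions (1)--(3) of Definition \ref{def4}, which you do not do. The paper needs no type-$D$ invariant theory at all: the factors $\mathfrak{P}(X)$ and $\bigoplus_{i=2}^{m}H^0(X,\,K_X^{\otimes 2i})$ come out of \cite[Theorem 4.6]{BSY} applied to the odd-rank summand $\mathbb{F}'$, and the factor $H^0(X,\,K_X^{\otimes(m+1)})$ is read off concretely as the coupling of $\mathcal{S}$ to $\mathbb{F}$, namely the second fundamental form $S(D;\mathcal{S})=\text{Id}_Q\otimes\phi$ in condition (3) of Definition \ref{def4}. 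Until you prove the factorization (with $D$ decomposing as $D_W\otimes\text{Id}+\text{Id}\otimes D_V$ plus a coupling term of the required shape) and supply an independent classification of the scalar even-orthogonal opers, your proposal describes the answer accurately but does not prove the bijection.
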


We note that in the cases of ${\rm SO}(2n+1,\mathbb{C})$ and ${\rm Sp}(2n,\mathbb{C})$, the
decomposition is same for even and odd $r$, unlike in Theorem \ref{thi}.

\section{Filtered $\text{SO}(2n,{\mathbb C})$-bundles with connections}\label{filtered}

Let $X$ be a compact connected Riemann surface of genus $g$, with $g\, \geq\, 2$.
The holomorphic cotangent bundle and the holomorphic tangent bundle of $X$ will be denoted by $K_X$
and $TX$ respectively.
Let $E$ be a holomorphic vector bundle on $X$ of rank $2n$, where $n\, \geq\, 2$,
such that $$\det E\,=\, \bigwedge\nolimits^{2n} E\,=\, {\mathcal O}_X\, $$ An $\text{SO}(2n,{\mathbb C})$
structure on $E$ is a holomorphic symmetric bilinear form
\begin{equation}\label{b0}
B_0\, \in\, H^0(X,\, \text{Sym}^2(E^*))
\end{equation}
on $E$ which is fiberwise nondegenerate. In other words, $B_0(x)$ is a nondegenerate
symmetric bilinear form on $E_x$ for every $x\, \in\, X$. A pair of the form $(E,\, B_0)$, where $B_0$ is an
$\text{SO}(2n,{\mathbb C})$ structure on a holomorphic vector bundle on $X$ $E$, would be called an $\text{SO}(2n,
{\mathbb C})$--bundle on $X$.

We note that for an $\text{SO}(2n, {\mathbb C})$--bundle $(E,\, B_0)$, the determinant line bundle
$\bigwedge^{2n} E$ is holomorphically identified with ${\mathcal O}_X$ uniquely up to a
sign. More precisely, for any $x\, \in\, X$, consider all isomorphisms of $(E_x, \, B_0(x))$ with
${\mathbb C}^{2n}$ equipped with the standard symmetric bilinear form. Then the space of corresponding
isomorphisms of $\bigwedge\nolimits^{2n} E_x$ with $\bigwedge\nolimits^{2n}{\mathbb C}^{2n}$ has exactly
two elements, and these two elements just differ by a sign.

\subsection{Filtered $\text{SO}(2n,{\mathbb C})$-bundles}

An $\text{SO}(2n,{\mathbb C})$ structure $B_0$ on $E$ produces a holomorphic isomorphism
\begin{equation}\label{e1}
B\, :\, E\, \longrightarrow\, E^*
\end{equation}
that sends any $v\, \in\, E_x$, $x\, \in\, X$, to the element of $E^*_x$ defined by
$w\, \longmapsto\, B_0(x)(w,\, v)$. The annihilator of a holomorphic subbundle $F\, \subset\, E$, for
the bilinear form $B_0$, will be denoted by $F^\perp$. So, for any $x\, \in\, X$, the subspace $F^\perp_x\,
\subset\, E_x$
consists of all $v\, \in\, E_x$ such that $B_0(x)(w,\, v)\, =\, 0$ for all $w\, \in\, F_x$. The bilinear form $B_0$
produces $C^\infty$ homomorphisms
$$
E\otimes (E\otimes \Omega{0,1}_X)\, \longrightarrow\, \Omega{0,1}_X\ \ \text{ and }\ \
(E\otimes \Omega{0,1}_X)\otimes E\, \longrightarrow\, \Omega{0,1}_X
$$
simply by tensoring with the identity map of $\Omega{0,1}_X$. Since
the bilinear form $B_0$ is holomorphic, we have
\begin{equation}\label{j1}
\overline{\partial} B_0(s,\, t)\,=\, B_0(\overline{\partial}_E s,\, t)+ B_0(s,\, \overline{\partial}_E t)\, ,
\end{equation}
where $s$ and $t$ are locally defined $C^\infty$ sections of $E$ and $\overline{\partial}_E\, :\,
C^\infty(X,\, E)\, \longrightarrow\, C^\infty(X,\, E\otimes \Omega{0,1}_X)$ is
the Dolbeault operator defining the holomorphic structure on $E$. If $t$ is a locally defined holomorphic 
section of $F$ and $s$ is a locally defined $C^\infty$ section of $F^\perp$, then from \eqref{j1} we have
$$
B_0(\overline{\partial}_E s,\, t)\,=\, 0\, ,
$$
because $\overline{\partial}_E t\,=\, 0\, B_0(s,\, t)$.
This implies that $F^\perp$ is actually a holomorphic subbundle of $E$; its rank is $2n-\text{rank}(F)$.

\begin{definition}
A \textit{filtration} of an $\text{SO}(2n, {\mathbb C})$--bundle $(E,\, B_0)$ is a filtration
of holomorphic subbundles of $E$
\begin{equation}\label{e-1}
0\, =\, F_0\, \subset\, F_1\, \subset\, F_2\, \subset \, \cdots\, \subset \, F_i\, \subset
\, \cdots\, \subset \, F_{2m} \, \subset \, F_{2m+1}\, =\, E
\end{equation}
satisfying the following two conditions:
\begin{enumerate}
\item $\text{rank}(F_{m+1}/F_m)\, =\, 2\cdot \text{rank}(F_1)$, and
$\text{rank}(F_i/F_{i-1})\, =\, \text{rank}(F_1)$, \\ for all $i\, \in\, \{1,\, \cdots,\, 2m+1\}
\setminus \{m+1\}$, and

\item $F^\perp_i\, =\, F_{2m+1-i}$ for all $0\, \leq\, i\, \leq\, m$.
\end{enumerate}
\end{definition}

Note that the first condition implies that $$(m+1)\cdot \text{rank}(F_1)\,=\, n\, .$$
The second condition implies that
the restriction $B_0\vert_{F_{m+1}}$ of the form $B_0$ to the subbundle $F_{m+1}\, \subset\, E$
has has the following two properties:
\begin{itemize}
\item the subbundle $F_{m}\, \subset\, F_{m+1}$ is annihilated by $B_0\vert_{F_{m+1}}$, meaning
$F_{m}\, \subset\, F^\perp_m$, and

\item the restriction $B_0\big\vert_{F_{m+1}}$ descends to the quotient bundle $F_{m+1}/F_m$ as a
fiberwise nondegenerate symmetric bilinear form.
\end{itemize}

For notational convenience, the filtration $\{F_i\}_{i=0}^{2m+1}$ in \eqref{e-1} will henceforth be
denoted by ${\mathcal F}_{\bullet}$.

\begin{definition}\label{def1}
An $\text{SO}(2n, {\mathbb C})$--bundle equipped with a filtration will be called
a \textit{filtered} $\text{SO}(2n, {\mathbb C})$--bundle. The odd integer $2m+1$ in
\eqref{e-1} will be called the \textit{length} of the filtration.
\end{definition}

We shall always assume that $m\, \geq\, 2$. This is because $$\text{SO}(4,{\mathbb C})\,=\,
(\text{SL}(2,{\mathbb C})\times\text{SL}(2,{\mathbb C}))/({\mathbb Z}/2{\mathbb Z})\, .$$

\subsection{$\text{SO}(2n,{\mathbb C})$-quasiopers: Filtered $\text{SO}(2n,{\mathbb C})$-bundles with connections}

Recall that a \textit{holomorphic connection} on a holomorphic vector bundle $E$ on $X$ is a first order
holomorphic differential operator 
$$
D\, :\, E\, \longrightarrow\, E\otimes K_X
$$
satisfying the Leibniz identity, this is, \[D(fs)\,=\, fD(s)+ s\otimes df\] for
any locally defined holomorphic function $f$ on $X$ and any locally defined holomorphic section $s$
of $E$ \cite{At}. In particular, a holomorphic connection is automatically flat because $\Omega^{2,0}_X\,=\, 0$.
The bilinear form $B_0$ in \eqref{b0} produces holomorphic homomorphisms
$$
E\otimes (E\otimes K_X)\, \longrightarrow\, K_X\ \ \text{ and }\ \
(E\otimes K_X)\otimes E\, \longrightarrow\, K_X
$$
simply by tensoring with the identity map of $K_X$.
A holomorphic connection on an $\text{SO}(2n,
{\mathbb C})$--bundle $(E,\, B_0)$ is a holomorphic connection $D$ on the holomorphic vector bundle
$E$ satisfying the identity
$$
\partial B_0(s,\, t)\,=\, B_0(D(s),\, t) + B_0(s,\, D(t))
$$
for all locally defined holomorphic sections $s$ and $t$ of $E$.

We note that for a holomorphic connection $D$ on an $\text{SO}(2n,
{\mathbb C})$--bundle $(E,\, B_0)$, the connection
on the determinant line bundle $\bigwedge^{2n} E\,=\, {\mathcal O}_X$ induced by $D$ coincides with the
trivial connection on the trivial holomorphic line bundle given by the de Rham differential $d$
(it is the unique rank one holomorphic connection on $X$ with trivial monodromy). Indeed, this
follows immediately from the fact that the isomorphism $B$ in \eqref{e1} takes the connection $D$ on
$E$ to the dual connection on $E^*$ induced by $D$.

Let $D$ a holomorphic connection on $E$, and let
\[
F_1\, \subset\, F_2\, \subset\, E\ \ \text{ and }\ \
F_3 \, \subset\, F_4\, \subset\, E
\]
be holomorphic subbundles such that $$D(F_1)\, \subset\, F_3\otimes K_X\ \ \text{ and }\ \
D(F_2)\, \subset\, F_4\otimes K_X\, .$$ 

\begin{definition}\label{second}The {\rm second fundamental form} of
$(F_1,F_2,F_3,F_4)$ for the connection $D$ is the map
\begin{eqnarray}\label{en}
S(D; F_1,F_2,F_3, F_4) \, :\, F_2/F_1 & \longrightarrow & (F_4/F_3)\otimes K_X\\
s& \longmapsto & D(\widetilde{s})\nonumber
\end{eqnarray}
that sends any locally defined holomorphic section $s$ of $F_2/F_1$ to the image
of $D(\widetilde{s})$ in $(F_4/F_3)\otimes K_X$, where $\widetilde{s}$ is any
locally defined holomorphic section of the subbundle $F_2$ that projects to $s$
under the quotient map $F_2\, \longrightarrow\, F_2/F_1$.
\end{definition}

It is straightforward to check that the image
of $D(\widetilde{s})$ in $(F_4/F_3)\otimes K_X$ does not depend on the
choice of the above lift $\widetilde{s}$ of $s$ (see \cite[Lemma 2.10]{BSY}).

\begin{definition}\label{def2}
Let $(E,\, B_0,\, {\mathcal F}_{\bullet})$ be a filtered $\text{SO}(2n, {\mathbb C})$--bundle.
A {\it holomorphic connection on $(E,\, B_0,\, {\mathcal F}_{\bullet})$} is a holomorphic connection
$D$ on $(E,\, B_0)$ satisfying the following three conditions:
\begin{enumerate}
\item $D(F_i)\, \subset\, F_{i+1}\otimes K_X$ for all $1\, \leq\, i\, \leq\, 2m$ (see \eqref{e-1}),

\item the second fundamental form
$$
S(D,i)\, :\, F_i/F_{i-1}\, \longrightarrow\, (F_{i+1}/F_i)\otimes K_X
$$
is an isomorphism for all $i\, \in\, \{1,\, \cdots, \,2m+1\}\setminus\{m,\,m+1\}$, and

\item the composition of homomorphisms
$$
(S(D,m+1)\otimes\text{Id}_{K_X})\circ S(D,m)\, :\, F_m/F_{m-1}
\, \longrightarrow\, (F_{m+2}/F_{m+1})\otimes K^{\otimes 2}_X
$$
is an isomorphism.
\end{enumerate}
\end{definition}

\begin{definition}\label{def3}
A \textit{generalized} $\text{SO}(2n, {\mathbb C})$--\textit{quasioper} on $X$ is
a quadruple $(E,\, B_0,\, {\mathcal F}_{\bullet},\, D)$, where
$(E,\, B_0,\, {\mathcal F}_{\bullet})$ is a filtered $\text{SO}(2n, {\mathbb C})$--bundle,
and $D$ is a holomorphic connection on the filtered $\text{SO}(2n, {\mathbb C})$--bundle
$(E,\, B_0,\, {\mathcal F}_{\bullet})$.

Two generalized $\text{SO}(2n, {\mathbb C})$--quasiopers $(E,\, B_0,\, {\mathcal F}_{\bullet},\, D)$
and $(E',\, B'_0,\, {\mathcal F}'_{\bullet},\, D')$ are called \textit{isomorphic} if there is a holomorphic
isomorphism of vector bundles
$$
\Phi\,:\, E\, \longrightarrow\, E'
$$
such that
\begin{itemize}
\item $\Phi$ takes the bilinear form $B_0$ on $E$ to the bilinear form $B'_0$ on $E'$,

\item $\Phi$ takes the filtration ${\mathcal F}_{\bullet}$ of $E$ to the filtration
${\mathcal F}'_{\bullet}$ of $E'$, and

\item $\Phi$ takes the connection $\nabla$ on $E$ to the connection $\nabla'$ on $E'$.
\end{itemize}
\end{definition}

\begin{proposition}\label{dual}
Given a generalized ${\rm SO}(2n, {\mathbb C})$--quasioper
$(E,\, B_0,\, {\mathcal F}_{\bullet},\, D)$, there is a naturally associated isomorphic dual quasioper. 
\end{proposition}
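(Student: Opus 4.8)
The plan is to build the dual quasioper explicitly on the dual bundle $E^*$ and then to exhibit the canonical isomorphism $B$ of \eqref{e1} as an isomorphism of quasiopers, in the sense of Definition \ref{def3}. First I would endow $E^*$ with the four dual structures. The determinant is $\det E^*\,=\,(\det E)^*\,=\,{\mathcal O}_X$. The form $B^*_0\,\in\, H^0(X,\,\text{Sym}^2(E))$ is obtained by transporting $B_0$ through $B$ (equivalently, it is the inverse form on $E^*\,=\,(E^*)^{**}$), so that $B$ is tautologically an isometry of $(E,\,B_0)$ with $(E^*,\,B^*_0)$. The dual filtration ${\mathcal F}^*_{\bullet}$ is defined by the annihilators $F^*_i\,:=\,(F_{2m+1-i})^{\perp\text{-ann}}\,=\,(E/F_{2m+1-i})^*$ for $0\,\leq\, i\,\leq\, 2m+1$; this is a filtration of $E^*$ with $F^*_0\,=\,0$ and $F^*_{2m+1}\,=\,E^*$. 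Finally $D^*$ is the connection dual to $D$, characterized for local holomorphic sections $\phi$ of $E^*$ and $t$ of $E$ by $d\langle \phi,\,t\rangle\,=\,\langle D^*\phi,\,t\rangle+\langle \phi,\,Dt\rangle$.

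The core of the argument is to check that $B$ carries each structure on $(E,\,B_0,\,{\mathcal F}_{\bullet},\,D)$ to the corresponding structure on $(E^*,\,B^*_0,\,{\mathcal F}^*_{\bullet},\,D^*)$. Compatibility with the bilinear forms holds by the very definition of $B^*_0$. For the filtrations I would use condition (2) of the filtration, that is $F^\perp_i\,=\,F_{2m+1-i}$: for $v\,\in\, F_i$ the functional $B(v)\,=\,B_0(\cdot,\,v)$ vanishes on $F_{2m+1-i}$ precisely because $F_i\,=\,F^\perp_{2m+1-i}$, so $B(F_i)\,\subseteq\, F^*_i$, and the rank identity $\text{rank}(F_i)\,=\,2n-\text{rank}(F_{2m+1-i})\,=\,\text{rank}(F^*_i)$ upgrades this to the equality $B(F_i)\,=\,F^*_i$. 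For the connection I would use the compatibility identity $\partial B_0(s,\,t)\,=\,B_0(D(s),\,t)+B_0(s,\,D(t))$: since $\langle B(s),\,t\rangle\,=\,B_0(t,\,s)$, differentiating and comparing with the defining identity of $D^*$ yields $\langle D^*(B(s)),\,t\rangle\,=\,B_0(t,\,D(s))\,=\,\langle (B\otimes\text{Id}_{K_X})(D(s)),\,t\rangle$ for all $t$, hence $D^*\circ B\,=\,(B\otimes\text{Id}_{K_X})\circ D$, i.e. $B$ takes $D$ to $D^*$.

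With these three compatibilities established, $B$ is a holomorphic vector bundle isomorphism carrying all four pieces of structure across, so $(E^*,\,B^*_0,\,{\mathcal F}^*_{\bullet},\,D^*)$ is isomorphic to $(E,\,B_0,\,{\mathcal F}_{\bullet},\,D)$ in the sense of Definition \ref{def3}. Because the defining conditions of a generalized quasioper, namely the rank conditions of \eqref{e-1}, the Griffiths transversality $D(F_i)\,\subseteq\, F_{i+1}\otimes K_X$, and the isomorphism requirements on the second fundamental forms in Definition \ref{def2}, are all preserved under an isomorphism, the dual quadruple is automatically itself a valid generalized $\text{SO}(2n,{\mathbb C})$--quasioper. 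I expect the main point requiring care to be the bookkeeping of the dual filtration: one must verify that the index-reversing assignment $F^*_i\,=\,(F_{2m+1-i})^{\perp\text{-ann}}$ is simultaneously consistent with both the rank conditions and the perp conditions of the filtration (in particular that $F^*_{m+1}/F^*_m\,\cong\,(F_{m+1}/F_m)^*$ inherits a fiberwise nondegenerate form), together with the minor check that $B$ respects the $\text{SO}$--structure at the level of $\det E\,=\,\det E^*\,=\,{\mathcal O}_X$, where the sign ambiguity already noted after \eqref{b0} makes the orientation matching unobstructed.
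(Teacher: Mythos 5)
Your proposal is correct and follows essentially the same route as the paper: dualize the bundle, form, filtration (via annihilators, which agree with the paper's $G_{2m+1-i}\,=\,\ker(E^*\,\to\,(F_i)^*)$), and connection, then exhibit $B$ from \eqref{e1} as the isomorphism of quasiopers. The only organizational difference is that you verify the isomorphism first and obtain the quasioper axioms for the dual quadruple by transport of structure, which is a clean way of supplying the checks the paper dismisses as straightforward.
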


\begin{proof}
Let $(E,\, B_0,\, {\mathcal F}_{\bullet},\, D)$ be a generalized $\text{SO}(2n, {\mathbb C})$--quasioper
on $X$, where ${\mathcal F}_{\bullet}$, as in \eqref{e-1}, is a filtration of $E$. Consider the dual
vector bundle $E^*$. It is equipped with a holomorphic connection $D^*$ induced by the connection
$D$ on $E$.

Since the symmetric bilinear form $B_0$ on $E$ is nondegenerate, it produces a holomorphic
symmetric nondegenerate bilinear form $B^*_0$ on $E^*$. For any $F_i$ in \eqref{e-1}, define
\begin{equation}\label{gi}
G_{2m+1-i}\, \subset\, E^*
\end{equation}
to be the kernel of the natural projection $E^*\, \longrightarrow\, (F_i)^*$. Then
\begin{equation}\label{z}
(E^*,\, B^*_0, \{G_j\}_{j=0}^{2m+1},\, D^*)
\end{equation}
is also a generalized $\text{SO}(2n, {\mathbb C})$--quasioper.

It is straightforward to check that the holomorphic isomorphism $B$ in \eqref{e1}
takes the generalized $\text{SO}(2n, {\mathbb C})$--quasioper $(E,\, B_0,\, {\mathcal F}_{\bullet},\, D)$
to the generalized $\text{SO}(2n, {\mathbb C})$--quasioper $(E^*,\, B^*_0, \{G_j\}_{j=0}^{2m+1},\, D^*)$
constructed in \eqref{z}.
\end{proof}

\section{Properties of a generalized $\text{SO}(2n, {\mathbb C})$--quasioper}\label{se3}

Let $W$ be holomorphic vector bundle over $X$ equipped with a holomorphic connection $D_W$, and 
let $V\, \subset\, W$ be any holomorphic subbundle.

\begin{lemma}\label{3.1}
There is a unique minimal holomorphic subbundle $\widehat{D}_W(V)$
of $W$ containing $V$ such that the connection $D_W$ takes $V$ into $\widehat{D}_W(V)\otimes K_X$.
\end{lemma}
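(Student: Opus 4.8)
The plan is to realize $\widehat{D}_W(V)$ as the preimage, along the quotient map $q\colon W\,\longrightarrow\, W/V$, of a suitable subbundle of $W/V$ extracted from the second fundamental form of $V$. First I would form the $\mathcal{O}_X$--linear homomorphism
\[
\psi\, :=\, S(D_W;\, 0,\, V,\, V,\, W)\, :\, V\, \longrightarrow\, (W/V)\otimes K_X
\]
in the notation of Definition \ref{second}, namely the composite of $D_W\colon V\,\longrightarrow\, W\otimes K_X$ with the projection $W\otimes K_X\,\longrightarrow\,(W/V)\otimes K_X$. That $\psi$ is $\mathcal{O}_X$--linear follows directly from the Leibniz identity: for a locally defined holomorphic function $f$ and a holomorphic section $s$ of $V$, one has $D_W(fs)\,=\,fD_W(s)+s\otimes df$, and the term $s\otimes df$ lies in $V\otimes K_X$, hence maps to zero in $(W/V)\otimes K_X$. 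Thus $\psi$ is a genuine homomorphism of coherent sheaves.

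Next, because $X$ is a smooth curve, the image of $\psi$ has a well-defined saturation which is again a holomorphic subbundle. Twisting by $K_X^{-1}$ I view $\psi$ as $\overline{\psi}\colon V\otimes K_X^{-1}\,\longrightarrow\, W/V$, and let $N\,\subset\, W/V$ be the saturation of its image, that is, the unique holomorphic subbundle of $W/V$ containing $\overline{\psi}(V\otimes K_X^{-1})$ for which the quotient $(W/V)/N$ is torsion-free. I then set
\[
\widehat{D}_W(V)\, :=\, q^{-1}(N)\, \subset\, W,
\]
a holomorphic subbundle of $W$ with $V\,\subset\,\widehat{D}_W(V)$ and $\widehat{D}_W(V)/V\,=\,N$.

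It remains to check the three properties. Containment $V\,\subset\,\widehat{D}_W(V)$ holds by construction. For any local section $s$ of $V$, the image of $D_W(s)$ in $(W/V)\otimes K_X$ equals $\psi(s)$, which lies in $N\otimes K_X\,=\,(\widehat{D}_W(V)/V)\otimes K_X$; hence $D_W(s)\,\in\,\widehat{D}_W(V)\otimes K_X$, so $D_W(V)\,\subset\,\widehat{D}_W(V)\otimes K_X$. Finally, let $F\,\subset\, W$ be any holomorphic subbundle with $V\,\subset\, F$ and $D_W(V)\,\subset\, F\otimes K_X$. Then $F/V$ is a subbundle of $W/V$, and the hypothesis forces $\psi(V)\,\subset\,(F/V)\otimes K_X$; since $(F/V)\otimes K_X$ is saturated in $(W/V)\otimes K_X$ and contains $\psi(V)$, it contains its saturation, whence $N\,\subset\, F/V$ and therefore $\widehat{D}_W(V)\,=\,q^{-1}(N)\,\subset\, F$. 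Thus $\widehat{D}_W(V)$ is contained in every subbundle satisfying the two requirements, which establishes both minimality and uniqueness simultaneously.

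The only real subtlety is the passage to the saturation in the second step: the image subsheaf $\psi(V)$ need not be a subbundle, since $\psi$ may drop rank at finitely many points of $X$ and leave torsion in the cokernel. Replacing the image by its saturation repairs this, and it is precisely here that the hypothesis that $X$ is one-dimensional is used---over a smooth curve every coherent subsheaf of a locally free sheaf is contained in a smallest subbundle, its saturation, with locally free quotient. Granting this, the least-element argument of the last step is purely formal and yields the asserted uniqueness.
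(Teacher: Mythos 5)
Your proof is correct and follows essentially the same route as the paper: form the second fundamental form $S(D_W;V)\colon V\to (W/V)\otimes K_X$, saturate its image (the paper phrases this as taking the preimage of the torsion part of the cokernel, which is the same operation), and pull the resulting subbundle of $W/V$ back along $W\to W/V$; twisting by $K_X^{-1}$ before rather than after saturating makes no difference. If anything, your write-up is slightly more complete, since you spell out the minimality argument (a saturated subsheaf containing a subsheaf contains its saturation) that the paper dismisses as clear.
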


\begin{proof} From Definition \ref{second}, consider the second fundamental form of $V$
for the connection $D_W$
$$
S(D_W; V)\, :\, V\, \longrightarrow\, (W/V)\otimes K_X
$$
by letting $F_1\,=\, 0, F_2\,=\, F_3\,=\, V$
and $F_4\,=\, W$ in Eq.~\eqref{en}. Let
$${\mathcal T}\, \subset\, ((W/V)\otimes K_X)/(S(D_W; V)(V))$$ be the torsion part of the coherent
analytic sheaf $((W/V)\otimes K_X)/(S(D_W; V)(V))$. The
inverse image of $\mathcal T$ under the quotient map \[(W/V)\otimes K_X\, \longrightarrow\,
((W/V)\otimes K_X)/(S(D; V)(V))\] will be denoted by ${\mathcal F}$. So
${\mathcal F}\otimes TX$ is a holomorphic subbundle of $$(W/V)\otimes K_X\otimes TX\,=\, W/V\,.$$

The inverse image of the subbundle
${\mathcal F}\otimes TX\, \subset\, W/V$ under the quotient map $W\, \longrightarrow\, W/V$
will be denoted by $\widehat{D}_W(V)$.

Note that $\widehat{D}_W(V)$ is a holomorphic subbundle of $W$, because ${\mathcal F}\otimes TX$
is a holomorphic subbundle of $W/V$. Also, $V$ is a holomorphic subbundle of $\widehat{D}_W(V)$. From
the construction of $\widehat{D}_W(V)$ it is evident that we have
$$D_W (V)\, \subset\, \widehat{D}_W(V)\otimes K_X\, .$$ Also it is clear that
$\widehat{D}_W(V)$ is the smallest among all subbundles $U$ of $W$ such that
$D_W (V)\, \subset\, U\otimes K_X$.
\end{proof}

Note that $V$ is preserved by the connection $D_W$ if and only if $\widehat{D}_W(V)
\,=\, V$, where $\widehat{D}_W(V)$ is constructed in Lemma \ref{3.1}.

The holomorphic subbundle $\widehat{D}_W(\widehat{D}_W(V))\, \subset \, W$ will be denoted by
$\widehat{D}^2_W(V)$. Moreover, for ease of notation, inductively define the subbundle
\begin{equation}\label{indd}
\widehat{D}^{k+1}_W(V)\, :=\, \widehat{D}_W(\widehat{D}^k_W(V))\, \subset \, W\, ,
\end{equation}
for $k \,\geq\, 2$. So $\{\widehat{D}^{j}_W(V)\}_{j\geq 1}$ is an increasing sequence of
holomorphic subbundles of $W$.

Through Lemma \ref{3.1}, we can construct a holomorphic subbundle of a generalized ${\rm SO}(2n,\mathbb{C})$--quasioper.
Indeed, let $$(E,\, B_0,\, {\mathcal F}_{\bullet},\, D)$$ be a generalized $\text{SO}(2n, {\mathbb C})$--quasioper
on $X$, where ${\mathcal F}_{\bullet}$, as in \eqref{e-1}, is a filtration
\begin{equation}\label{e-1b}
0\, =\, F_0\, \subset\, F_1\, \subset\, F_2 \,\subset\, \cdots\, \subset \, F_i\, \subset
\, \cdots\, \subset \, F_{2m} \, \subset \, F_{2m+1}\, =\, E
\end{equation}
of length $2m+1$.
For the holomorphic subbundle $F_1\, \subset\, E$ in \eqref{e-1b}, define the holomorphic subbundle
\begin{equation}\label{e6}
{\mathbb F}\, :=\, \widehat{D}^{2m}_E(F_1) \, \subset\, E
\end{equation}
(see \eqref{indd}).
We note that the subbundle ${\mathbb F}$ in general is not preserved by the connection $D$ on $E$.

Now consider the generalized $\text{SO}(2n, {\mathbb C})$--quasioper
$(E^*,\, B^*_0, \{G_j\}_{j=0}^{2m+1},\, D^*)$ in \eqref{z} associated to
$(E,\, B_0,\, {\mathcal F}_{\bullet},\, D)$ via Proposition \ref{dual}. As in \eqref{e6}, define
the holomorphic subbundle
\begin{equation}\label{eg}
{\mathbb G}\, :=\, (\widehat{D}^*)^{2m}_{E^*}(G_1) \, \subset\, E^*\, ,
\end{equation}
where $G_1$ is constructed in \eqref{gi}. The dual of the natural quotient map
$E^*\, \longrightarrow\, E^*/{\mathbb G}$ is a fiberwise injective holomorphic homomorphism
$(E^*/{\mathbb G})^*\, \longrightarrow\, E^{**}$.
Therefore, we have the holomorphic subbundle
\begin{equation}\label{s}
{\mathcal S}\, :=\, (E^*/{\mathbb G})^* \, \subset\, E^{**}\,=\, E
\end{equation}
given by the image of the above fiberwise injective homomorphism.

\begin{lemma}\label{lem1}
For the holomorphic subbundles $\mathbb F$ and ${\mathcal S}$ of $E$, in \eqref{e6} and \eqref{s}
respectively, the natural homomorphism
$$
{\mathbb F}\oplus{\mathcal S}\, \longrightarrow\, E
$$
is an isomorphism.
Moreover, the resulting holomorphic decomposition $$E\,=\, {\mathbb F}\oplus{\mathcal S}$$ of $E$
is orthogonal with respect to the bilinear form $B_0$ on $E$.
\end{lemma}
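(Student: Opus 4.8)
Let me sketch a strategy for proving $E = \mathbb{F} \oplus \mathcal{S}$ orthogonally.

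First, I need to understand the two subbundles via the connection filtration. The subbundle $\mathbb{F} = \widehat{D}^{2m}_E(F_1)$ is obtained by starting with $F_1$ and repeatedly applying the "minimal prolongation" operator from Lemma \ref{3.1}. The plan is to identify $\widehat{D}^k_E(F_1)$ explicitly in terms of the oper filtration. Because $D$ satisfies Griffiths transversality $D(F_i) \subset F_{i+1} \otimes K_X$ together with the nondegeneracy of the second fundamental forms $S(D,i)$ for $i \notin \{m, m+1\}$ and the composite isomorphism across the middle, I expect that $\widehat{D}^k_E(F_1) = F_{k+1}$ for $0 \le k \le m-1$, while the single "defective" step at the middle forces $\widehat{D}^m_E(F_1) = F_{m+1}$ to jump past the degenerate index, and then the prolongation continues through the upper half of the filtration. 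Concretely, I would prove by induction that $\widehat{D}^k_E(F_1)$ agrees with $F_{k+1}$ for the range where $S(D,\cdot)$ is an isomorphism; at the middle, the torsion-sheaf construction in Lemma \ref{3.1} selects exactly the saturation dictated by the composite $S(D,m+1)\circ S(D,m)$ being an isomorphism, so the prolongation skips the rank-$r$ gap correctly. This computation, tracking how the prolongation interacts with the doubled rank at $F_{m+1}/F_m$, is where I expect the main difficulty to lie.

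Next, I would carry out the dual computation for $\mathbb{G} = (\widehat{D}^*)^{2m}_{E^*}(G_1)$. By Proposition \ref{dual}, the dual quasioper is isomorphic to the original via $B$, and the filtration $\{G_j\}$ is the annihilator filtration, $G_{2m+1-i} = F_i^\perp$ under the identification $E \cong E^*$. Since the isomorphism $B$ of \eqref{e1} carries $D$ to $D^*$ and $F_\bullet$ to $G_\bullet$, the prolongation $\widehat{D}^*$ on $E^*$ corresponds exactly to $\widehat{D}$ on $E$ transported through $B$. Therefore $\mathbb{G}$ is the image under $B$ of the analogous prolongation of $F_1^\perp = F_{2m}$ inside $E$, and $\mathcal{S} = (E^*/\mathbb{G})^* \subset E$ is the $B_0$-annihilator $\mathbb{G}^{\perp_B}$ pulled back, i.e. $\mathcal{S} = \mathbb{F}^\perp$ under the self-duality. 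This is the key structural point: the constructions of $\mathbb{F}$ and $\mathcal{S}$ are exchanged by $B$, so $\mathcal{S} = \mathbb{F}^\perp$ with respect to $B_0$.

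Granting $\mathcal{S} = \mathbb{F}^\perp$, the orthogonality assertion is immediate from the definition of the annihilator, so it remains only to prove that $\mathbb{F} \oplus \mathbb{F}^\perp \to E$ is an isomorphism. A subbundle and its $B_0$-annihilator always satisfy $\operatorname{rank}(\mathbb{F}) + \operatorname{rank}(\mathbb{F}^\perp) = 2n$, so the natural map is an isomorphism precisely when $\mathbb{F} \cap \mathbb{F}^\perp = 0$, equivalently when $B_0$ restricts to a nondegenerate form on $\mathbb{F}$. I would verify this by tracking the ranks from the first step: the rank of $\mathbb{F} = \widehat{D}^{2m}_E(F_1)$, computed from the explicit identification above, should come out to $n$, matching $\operatorname{rank}(\mathcal{S}) = 2n - n = n$ and giving a dimension count consistent with a direct sum. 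The nondegeneracy of $B_0\vert_{\mathbb{F}}$ then follows from the behavior of $B_0$ on the filtration—specifically from condition (2) of the filtration definition, which makes $B_0$ descend nondegenerately on $F_{m+1}/F_m$—combined with the prolongation identification showing $\mathbb{F}$ pairs with $\mathcal{S}$ without radical. The delicate part is ensuring the rank bookkeeping is exactly right around the middle index, which is precisely the place where the ${\rm SO}(2n,\mathbb{C})$ case differs from the odd and symplectic cases.
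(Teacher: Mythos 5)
Your duality observation is correct and is in fact how the orthogonality is obtained: the isomorphism $B$ of \eqref{e1} intertwines $(E,\,D,\,\{F_j\})$ with $(E^*,\,D^*,\,\{G_j\})$, so ${\mathbb G}\,=\,B({\mathbb F})$ and hence ${\mathcal S}\,=\,(E^*/{\mathbb G})^*\,=\,{\mathbb F}^\perp$, giving $B_0({\mathbb F},\,{\mathcal S})\,=\,0$. However, the prolongation computation that you yourself flag as "the main difficulty" is wrong at the middle step, and the error is fatal. Condition (3) of Definition \ref{def2} says that $(S(D,m+1)\otimes\text{Id}_{K_X})\circ S(D,m)$ is an isomorphism between bundles of rank $r$; this forces $S(D,m)$ to be fiberwise injective, so its image is a rank-$r$ subbundle of the rank-$2r$ bundle $(F_{m+1}/F_m)\otimes K_X$. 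By the construction in Lemma \ref{3.1}, the middle step of the prolongation therefore adds only rank $r$: $\widehat{D}^m_E(F_1)\,=\,\widehat{D}_E(F_m)$ is a \emph{proper} subbundle of $F_{m+1}$, of rank $(m+1)r$, while $F_{m+1}$ has rank $(m+2)r$. There is no "jump past the degenerate index" --- the absence of such a jump is exactly the subtlety of the ${\rm SO}(2n,{\mathbb C})$ case. Indeed, if your claim $\widehat{D}^m_E(F_1)\,=\,F_{m+1}$ were true, iterating would give $\widehat{D}^{2m}_E(F_1)\,=\,F_{2m+1}\,=\,E$, i.e. ${\mathbb F}\,=\,E$ and ${\mathcal S}\,=\,{\mathbb F}^\perp\,=\,0$, and the lemma would be false. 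The correct bookkeeping, with every step adding exactly $r$ (this is what makes ${\mathbb F}\,\cong\,J^{2m}(Q)$ under \eqref{ch2}, each $\widehat{D}^j_E(F_1)$ being the kernel of $J^{2m}(Q)\to J^{2m-j-1}(Q)$), gives $\text{rank}({\mathbb F})\,=\,(2m+1)r\,=\,2n-r$ and $\text{rank}({\mathcal S})\,=\,r$. Your assertion that both ranks equal $n$ is therefore wrong, and is even inconsistent with your own identification, which would give $\text{rank}({\mathbb F})\,=\,2n$.

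There is a second gap at the end: granting ${\mathcal S}\,=\,{\mathbb F}^\perp$ and the rank count, the entire content of the direct-sum statement is ${\mathbb F}\cap{\mathbb F}^\perp\,=\,0$, i.e. the nondegeneracy of $B_0\vert_{\mathbb F}$, and your phrase "${\mathbb F}$ pairs with ${\mathcal S}$ without radical" restates this claim rather than proving it; condition (2) of the filtration definition (nondegeneracy of $B_0$ on $F_{m+1}/F_m$) does not by itself deliver it. The paper's proof runs in the opposite direction: it first establishes that the natural map ${\mathbb F}\oplus{\mathcal S}\,\longrightarrow\,E$ is \emph{surjective} (in jet language, $f_{2m}$ of \eqref{e4} is surjective with kernel exactly ${\mathcal S}$, while the isomorphism properties of the second fundamental forms make ${\mathbb F}$ map onto $J^{2m}(Q)$), and then the rank count $(2n-r)+r\,=\,2n$ forces this surjection of bundles of equal rank to be an isomorphism; the vanishing ${\mathbb F}\cap{\mathcal S}\,=\,0$ and the identification ${\mathbb F}^\perp\,=\,{\mathcal S}$ come out together, with $B_0({\mathbb F},\,{\mathcal S})\,=\,0$ supplied by the duality you already have. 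To repair your write-up you must replace the middle-step "jump" by the correct rank-$r$ prolongation and supply an actual argument for surjectivity (or, equivalently, for the nondegeneracy of $B_0\vert_{\mathbb F}$).
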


\begin{proof}
From the properties of the filtration ${\mathcal F}_{\bullet}$ and the connection $D$ it follows that
the natural homomorphism
$$
{\mathbb F}\oplus{\mathcal S}\, \longrightarrow\, E
$$
is surjective. Note that from the properties of the filtration ${\mathcal F}_{\bullet}$ and $D$
we also have $\text{rank}({\mathbb F})\,=\, 2n-\frac{n}{m+1}\,=\, \text{rank}(E)-\text{rank}(F_1)$
and $\text{rank}({\mathcal S})\,=\, \frac{n}{m+1}\,=\, \text{rank}(F_1)$.
Furthermore, we have $B_0({\mathbb F},\, {\mathcal S})\,=\, 0$. These together imply that
${\mathbb F}\bigcap {\mathcal S}\,=\, 0$ and ${\mathbb F}^\perp\,=\, {\mathcal S}$.
\end{proof}

In what follows we shall describe an alternative construction of the subbundle $\mathcal S$ in \eqref{s} by
considering the jet bundle approach given in \cite{BSY}. Let
\begin{equation}\label{e2}
Q\, :=\, E/F_{2m}
\end{equation}
be the quotient in \eqref{e-1b}, and let
\begin{equation}\label{e3}
q\, :\, E\, \longrightarrow\, E/F_{2m}\,=\, Q
\end{equation}
be the quotient map.

For any nonnegative integer $i$, let $J^i(Q)$ be the $i$-th order jet bundle of $Q$ in \eqref{e2}
(see \cite[Section 3.1]{BSY}, \cite{Bi}, \cite{Bi1} for jet bundles). As shown in \cite[Eq. (3.3)]{BSY},
\cite[Eq. (3.5)]{BSY}, the connection $D$ on $E$
produces an ${\mathcal O}_X$--linear homomorphism 
\begin{equation}\label{e4}
f_i\, :\, E\, \longrightarrow\, J^i(Q)\, .
\end{equation}
We briefly recall the construction of $f_i$ as this homomorphism plays a crucial role.

Take any point $x\, \in\, X$, and let $x\, \in\, U_x\, \subset\, X$ be a simply connected
analytic open neighborhood of the point $x$. For any $v\, \in\, E_x$, let $\widetilde{v}$ be the
unique flat section of $E\big\vert_{U_x}$, for the flat connection $D$ on $E$, such that $\widetilde{v}(x)\,=\, v$. 
Consider the holomorphic section $q(\widetilde{v})$ of the vector bundle $Q\vert_{U_x}$ in \eqref{e2},
where $q$ is the projection in \eqref{e3}. Restricting this section $q(\widetilde{v})$ to the $i$--th order
infinitesimal neighborhood of $x$, we get an element of $J^i(Q)_x$; this element of
$J^i(Q)_x$ given by $q(\widetilde{v})$ will be denoted by $q(\widetilde{v})_i$. The map $f_i$
in \eqref{e4} sends any $v\, \in\, E_x$, $x\, \in\, X$, to $q(\widetilde{v})_i\,\in\,
J^i(Q)_x$ constructed above using $v$ and the connection $D$.

{}From the three conditions in Definition \ref{def2} it follows that the homomorphism
$$
f_{2m}\, :\, E\, \longrightarrow\, J^{2m}(Q)
$$
in \eqref{e4} is surjective. Moreover, the subbundle $\mathcal S$ in \eqref{s} coincides
with the kernel of the above homomorphism $f_{2m}$.
Consequently, we have a short exact sequence of holomorphic vector bundles
\begin{equation}\label{e5}
0\, \longrightarrow\, {\mathcal S}\, =\, \text{kernel}(f_{2m})
\, \longrightarrow\, E \,\stackrel{f_{2m}}{\longrightarrow}
\, J^{2m}(Q) \, \longrightarrow\, 0
\end{equation}
on $X$. Therefore, Lemma \ref{lem1} has the following corollary.

\begin{corollary}\label{cor1}
The composition of homomorphisms
$$
{\mathbb F}\, \hookrightarrow\, E\, \stackrel{f_{2m}}{\longrightarrow}\, J^{2m}(Q)\, ,
$$
where $f_{2m}$ is the homomorphism in \eqref{e4}, and $\mathbb F$ is the subbundle
of $E$ in Lemma \ref{lem1}, is an isomorphism.
\end{corollary}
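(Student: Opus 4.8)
The plan is to deduce the statement directly from the two structural facts already in place: the orthogonal direct sum decomposition $E\,=\,{\mathbb F}\oplus{\mathcal S}$ furnished by Lemma \ref{lem1}, and the identification ${\mathcal S}\,=\,\text{kernel}(f_{2m})$ recorded in the short exact sequence \eqref{e5}. Observe that the composition in question is nothing but the restriction $f_{2m}\big\vert_{\mathbb F}$ of $f_{2m}$ to the direct summand ${\mathbb F}\,\subset\, E$, so the corollary amounts to the assertion that this restriction is a holomorphic isomorphism onto $J^{2m}(Q)$.

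First I would establish fiberwise injectivity. If $v\,\in\,{\mathbb F}_x$ lies in the kernel of $f_{2m}$, then $v\,\in\,\text{kernel}(f_{2m})_x\,=\,{\mathcal S}_x$ by \eqref{e5}; but ${\mathbb F}_x\bigcap{\mathcal S}_x\,=\,0$ by Lemma \ref{lem1}, forcing $v\,=\,0$. Hence $f_{2m}\big\vert_{\mathbb F}$ is injective on every fiber.

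Next I would match ranks. Since $Q\,=\,E/F_{2m}\,=\,F_{2m+1}/F_{2m}$ has, by the first condition on the filtration ${\mathcal F}_{\bullet}$ (applied with $i\,=\,2m+1\,\neq\, m+1$), rank $\text{rank}(F_1)\,=\,\frac{n}{m+1}$, and since the $2m$-th order jet bundle of a bundle of rank $s$ has rank $(2m+1)s$, we get $\text{rank}(J^{2m}(Q))\,=\,(2m+1)\frac{n}{m+1}$. On the other hand Lemma \ref{lem1} gives $\text{rank}({\mathbb F})\,=\,2n-\frac{n}{m+1}\,=\,(2m+1)\frac{n}{m+1}$, so the two bundles have equal rank. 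A holomorphic homomorphism between vector bundles of the same rank that is injective on every fiber is fiberwise bijective, hence an isomorphism of holomorphic vector bundles; this yields the claim.

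Since all the needed ingredients have been assembled in Lemma \ref{lem1} and in \eqref{e5}, there is essentially no genuine obstacle here; the only point requiring a line of care is the rank bookkeeping, where one must invoke the filtration condition for $F_{2m+1}/F_{2m}$ to identify $\text{rank}(Q)$ together with the standard rank formula for jet bundles. Alternatively, one can bypass the rank count entirely: $f_{2m}$ is surjective and ${\mathcal S}\,=\,\text{kernel}(f_{2m})$ is a complement to ${\mathbb F}$ in $E$, so $f_{2m}$ already carries ${\mathbb F}$ onto the full image $J^{2m}(Q)$, and the fiberwise injectivity above then upgrades this surjection to an isomorphism.
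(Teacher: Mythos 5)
Your proposal is correct and follows essentially the same route as the paper, which presents Corollary \ref{cor1} as an immediate consequence of the decomposition $E\,=\,{\mathbb F}\oplus{\mathcal S}$ in Lemma \ref{lem1} combined with the exact sequence \eqref{e5} identifying ${\mathcal S}\,=\,\mathrm{kernel}(f_{2m})$ with $f_{2m}$ surjective --- precisely the argument in your final paragraph. The rank bookkeeping you carry out is a valid (and correct) extra verification, but it is not needed once surjectivity of $f_{2m}$ and ${\mathbb F}\bigcap{\mathcal S}\,=\,0$ are in hand.
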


Let
\begin{equation}\label{ch2}
f'_{2m}\, :\, {\mathbb F}\, \stackrel{\sim}{\longrightarrow}\, J^{2m}(Q)
\end{equation}
be the composition of homomorphisms in Corollary \ref{cor1}; recall from Corollary \ref{cor1}
that $f'_{2m}$ is an isomorphism.

Using the decomposition ${\mathbb F}\oplus{\mathcal S}\,=\, E$ in Lemma \ref{lem1}, consider the
composition of homomorphisms
\begin{equation}\label{co1}
{\mathbb F} \, \hookrightarrow\, E \, \stackrel{D}{\longrightarrow}\, E\otimes K_X
\, \stackrel{q_{\mathbb F}\otimes \text{Id}}{\longrightarrow}\,{\mathbb F}\otimes K_X\, ,
\end{equation}
where $q_{\mathbb F}\, :\, E\,=\, {\mathbb F}\oplus{\mathcal S}\, \longrightarrow\, {\mathbb F}$ is
the natural projection to factor ${\mathbb F}$.
This composition of homomorphisms is a holomorphic connection on ${\mathbb F}$, because
it satisfies the Leibniz identity. The holomorphic
connection on $\mathbb F$ constructed in \eqref{co1} will be denoted by $D^{\mathbb F}$.

Similarly, the composition of homomorphisms
\begin{equation}\label{co2}
{\mathcal S} \, \hookrightarrow\, E \, \stackrel{D}{\longrightarrow}\, E\otimes K_X
\, \stackrel{q_{\mathcal S}\otimes \text{Id}}{\longrightarrow}\,{\mathcal S}\otimes K_X\, ,
\end{equation}
where $q_{\mathcal S}\, :\, E\,=\, {\mathbb F}\oplus{\mathcal S}\, \longrightarrow\, {\mathcal S}$ is
the natural projection to factor ${\mathcal S}$
in Lemma \ref{lem1}, is a holomorphic connection on the holomorphic vector bundle ${\mathcal S}$. The holomorphic
connection on $\mathcal S$ constructed in \eqref{co2} will be denoted by $D^{\mathcal S}$.

The holomorphic connections $D^{\mathbb F}$ and $D^{\mathcal S}$, on $\mathbb F$ and
$\mathcal S$ respectively, together define a holomorphic connection $D^{\mathbb F}\oplus D^{\mathcal S}$
on ${\mathbb F}\oplus{\mathcal S}$.
It should be emphasized that the isomorphism ${\mathbb F}\oplus{\mathcal S}\,=\, E$ in Lemma \ref{lem1}
does not, in general, take the holomorphic connection $D^{\mathbb F}\oplus D^{\mathcal S}$ on
${\mathbb F}\oplus{\mathcal S}$ to the connection $D$ on $E$. Indeed, for the connection $D$
on $E$ the subbundles ${\mathbb F}$ and ${\mathcal S}$ of $E$ may have
nontrivial second fundamental form. On the
other hand, for the direct sum of connections $D^{\mathbb F}\oplus D^{\mathcal S}$ the
second fundamental form of both ${\mathbb F}$ and ${\mathcal S}$ vanish identically.

From Lemma \ref{lem1} it follows immediately that the restrictions of the bilinear form $B_0$ on $E$ to
both $\mathbb F$ and $\mathcal S$ are nondegenerate. The holomorphic symmetric nondegenerate
bilinear form on $\mathbb F$ (respectively, $\mathcal S$) obtained by restricting $B_0$
to $\mathbb F$ (respectively, $\mathcal S$) will be
denoted by $B_{\mathbb F}$ (respectively, $B_{\mathcal S}$); in particular, we have
$$
B_{\mathbb F}\, \in\, H^0(X,\, \text{Sym}^2({\mathbb F}^*))\ \ \text{ and }\ \
B_{\mathbb S}\, \in\, H^0(X,\, \text{Sym}^2({\mathbb S}^*))\, .
$$
As the
decomposition ${\mathbb F}\oplus{\mathcal S}\,=\, E$ in Lemma \ref{lem1} is orthogonal, we have
\begin{equation}\label{B}
B_0\,=\, B_{\mathbb F}\oplus B_{\mathcal S}\, .
\end{equation}

Since the connection $D$ preserves the bilinear form $B_0$ on $E$, and \eqref{B} holds, from the
constructions of the connections $D^{\mathbb F}$ in \eqref{co1} and the connection $D^{\mathcal S}$
in \eqref{co2} we have the following:

\begin{corollary}\label{cor2}
The connection $D^{\mathbb F}$ on $\mathbb F$ in \eqref{co1} preserves the bilinear form $B_{\mathbb F}$
on $\mathbb F$.

The connection $D^{\mathcal S}$ on $\mathcal S$ in \eqref{co2} preserves the bilinear form $B_{\mathcal S}$
on $\mathcal S$.
\end{corollary}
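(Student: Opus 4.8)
The plan is to verify the $B_0$--compatibility identity directly for $D^{\mathbb F}$, reducing it to the already-known compatibility of $D$ with $B_0$ by exploiting the orthogonal splitting of Lemma \ref{lem1}. I would treat only the case of $D^{\mathbb F}$ and $B_{\mathbb F}$ in detail, since the assertion for $D^{\mathcal S}$ and $B_{\mathcal S}$ follows verbatim by interchanging the roles of $\mathbb F$ and $\mathcal S$.

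First I would fix locally defined holomorphic sections $s,\, t$ of $\mathbb F$. Viewing them as sections of $E$ through the inclusion $\mathbb F\, \hookrightarrow\, E$, the definition of $B_{\mathbb F}$ as the restriction of $B_0$ gives $B_{\mathbb F}(s,\, t)\,=\, B_0(s,\, t)$, so that $\partial B_{\mathbb F}(s,\, t)\,=\,\partial B_0(s,\, t)$. Since $D$ is a holomorphic connection on the $\text{SO}(2n,\mathbb C)$--bundle $(E,\, B_0)$, the compatibility identity
$$
\partial B_0(s,\, t)\,=\, B_0(D(s),\, t) + B_0(s,\, D(t))
$$
holds.

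Next I would decompose $D(s)\,\in\, E\otimes K_X\,=\,(\mathbb F\oplus\mathcal S)\otimes K_X$ according to Lemma \ref{lem1} as $D(s)\,=\,(q_{\mathbb F}\otimes\text{Id})(D(s)) + (q_{\mathcal S}\otimes\text{Id})(D(s))$, the first summand being precisely $D^{\mathbb F}(s)$ by \eqref{co1}. Because the decomposition $E\,=\,\mathbb F\oplus\mathcal S$ is orthogonal for $B_0$ by Lemma \ref{lem1}, and this orthogonality is preserved when the pairing is tensored with $\text{Id}_{K_X}$, the pairing of the $\mathcal S$--component $(q_{\mathcal S}\otimes\text{Id})(D(s))\,\in\,\mathcal S\otimes K_X$ against $t\,\in\,\mathbb F$ vanishes. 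Hence $B_0(D(s),\, t)\,=\, B_0(D^{\mathbb F}(s),\, t)\,=\, B_{\mathbb F}(D^{\mathbb F}(s),\, t)$, and symmetrically $B_0(s,\, D(t))\,=\, B_{\mathbb F}(s,\, D^{\mathbb F}(t))$. Substituting these into the compatibility identity yields
$$
\partial B_{\mathbb F}(s,\, t)\,=\, B_{\mathbb F}(D^{\mathbb F}(s),\, t) + B_{\mathbb F}(s,\, D^{\mathbb F}(t))\, ,
$$
which is exactly the assertion that $D^{\mathbb F}$ preserves $B_{\mathbb F}$.

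The computation is short, and the only point requiring care — the step I regard as the crux — is the vanishing of the cross terms $B_0\big((q_{\mathcal S}\otimes\text{Id})(D(s)),\, t\big)$: it is here that the \emph{orthogonality} of the splitting in Lemma \ref{lem1}, rather than merely the splitting itself, is essential. Equivalently, although the subbundles $\mathbb F$ and $\mathcal S$ may have nonvanishing second fundamental forms inside $(E,\, D)$, these take values in the respective orthogonal complements and hence contribute nothing to the $B_0$--pairing within a single factor; this is precisely why the diagonal connections $D^{\mathbb F}$ and $D^{\mathcal S}$ remain compatible with the restricted forms $B_{\mathbb F}$ and $B_{\mathcal S}$, even though $D^{\mathbb F}\oplus D^{\mathcal S}$ need not equal $D$.
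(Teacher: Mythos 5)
Your proof is correct and takes essentially the same route as the paper: the paper states Corollary \ref{cor2} with no separate argument, deriving it directly from the $B_0$--compatibility of $D$, the constructions \eqref{co1} and \eqref{co2}, and the orthogonal decomposition $B_0\,=\,B_{\mathbb F}\oplus B_{\mathcal S}$ of \eqref{B}, which is exactly the computation you spell out. Your write-up simply makes explicit the one point the paper leaves implicit, namely that orthogonality of the splitting in Lemma \ref{lem1} kills the cross terms coming from the second fundamental forms.
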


\begin{proposition}\label{dq}
The connection $D^{\mathbb F}$ on $\mathbb F$ produces a
holomorphic connection $D_Q$ on $J^{2m}(Q)$.
\end{proposition}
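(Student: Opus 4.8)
The plan is to transport the connection $D^{\mathbb F}$ from $\mathbb F$ to $J^{2m}(Q)$ along the canonical isomorphism $f'_{2m}$ of \eqref{ch2}. Recall from Corollary \ref{cor1} that
$$
f'_{2m}\, :\, {\mathbb F}\, \stackrel{\sim}{\longrightarrow}\, J^{2m}(Q)
$$
is a holomorphic isomorphism of vector bundles, and from \eqref{co1} that $D^{\mathbb F}$ is a holomorphic connection on $\mathbb F$. Since $f'_{2m}$ is canonically determined by the quasioper data, the connection it produces on $J^{2m}(Q)$ will likewise be canonical, which is what makes it available for the subsequent analysis of $J^{2m}(Q)$.

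First I would define the candidate connection by conjugation,
$$
D_Q\, :=\, (f'_{2m}\otimes \text{Id}_{K_X})\circ D^{\mathbb F}\circ (f'_{2m})^{-1}\, :\, J^{2m}(Q)\, \longrightarrow\, J^{2m}(Q)\otimes K_X\, .
$$
This is a first order holomorphic differential operator, because $D^{\mathbb F}$ is one while both $f'_{2m}$ and its inverse are ${\mathcal O}_X$--linear. Then I would verify the Leibniz identity: for a locally defined holomorphic function $\varphi$ on $X$ and a locally defined holomorphic section $s$ of $J^{2m}(Q)$, using that $(f'_{2m})^{-1}$ is ${\mathcal O}_X$--linear, that $D^{\mathbb F}$ satisfies the Leibniz identity, and that $f'_{2m}\big((f'_{2m})^{-1}(s)\big)\,=\, s$, one computes directly that $D_Q(\varphi s)\,=\, \varphi\, D_Q(s)+ s\otimes d\varphi$. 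Hence $D_Q$ is a holomorphic connection on $J^{2m}(Q)$.

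There is essentially no obstacle here: all the content is carried by the isomorphism $f'_{2m}$ established in Corollary \ref{cor1}, after which the construction is the standard pushforward of a connection along a vector bundle isomorphism, and the verification of the Leibniz rule is a one line calculation. The only point worth emphasizing is that, because $f'_{2m}$ is canonical, the resulting $D_Q$ is canonically associated to the generalized $\text{SO}(2n,{\mathbb C})$--quasioper, rather than depending on any auxiliary choice.
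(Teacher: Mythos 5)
Your proof is correct and is essentially the paper's own argument: the paper likewise defines $D_Q$ by transporting $D^{\mathbb F}$ through the isomorphism $f'_{2m}$ of \eqref{ch2}, exactly as you do. The paper then supplements this with an equivalent direct construction of $D_Q$, realizing $\lambda\circ ((f_{2m+1}\vert_{\mathbb F})\circ (f'_{2m})^{-1})$ as a holomorphic splitting of the twisted dual of the Atiyah exact sequence of $J^{2m}(Q)$, but that is an additional description rather than a different proof of the statement.
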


\begin{proof}
This can be deduced from the fact that the homomorphism $f'_{2m}$ in \eqref{ch2}
is an isomorphism. So $D_Q$ is the holomorphic connection on $J^{2m}(Q)$ that corresponds to the
connection $D^{\mathbb F}$ on $\mathbb F$ by this isomorphism $f'_{2m}$.

We shall give a direct construction of this connection $D_Q$ on $J^{2m}(Q)$.
Let
$$
0\, \longrightarrow\, Q\otimes K^{\otimes (2m+1)}_X\, \stackrel{\iota}{\longrightarrow}\,
J^{2m+1}(Q)\, \stackrel{q}{\longrightarrow}\, J^{2m}(Q)\, \longrightarrow\, 0
$$
be the natural short exact sequence of jet bundles. It fits in the following commutative
diagram of homomorphisms:
\begin{equation}\label{cd}
\begin{matrix}
&& 0 && 0\\
&& \Big\downarrow && \Big\downarrow\\
0 & \longrightarrow & Q\otimes K^{\otimes (2m+1)}_X & \stackrel{\iota}{\longrightarrow} &
J^{2m+1}(Q) & \stackrel{q}{\longrightarrow} & J^{2m}(Q) & \longrightarrow && 0\\
&&\Big\downarrow && ~\,~ \Big\downarrow \lambda && \Vert\\
0 & \longrightarrow & J^{2m}(Q)\otimes K_X & \stackrel{\iota'}{\longrightarrow} &
J^1(J^{2m}(Q)) & \stackrel{q'}{\longrightarrow} & J^{2m}(Q) & \longrightarrow && 0\\
&& \Big\downarrow &&\Big\downarrow \\
&& J^{2m-1}(Q)\otimes K_X & \stackrel{=}{\longrightarrow} & J^{2m-1}(Q)\otimes K_X\\
&& \Big\downarrow && \Big\downarrow\\
&& 0 && 0
\end{matrix}
\end{equation}
(see \cite[p.~4, (2.4)]{Bi} and \cite[p.~10, (3.4)]{Bi}).

Consider the homomorphism
$$
(f_{2m+1}\vert_{\mathbb F})\circ (f'_{2m})^{-1}\, :\, J^{2m}(Q)\, \longrightarrow\, J^{2m+1}(Q)\, ,
$$
where $f_{2m+1}\vert_{\mathbb F}$ is the restriction of the homomorphism in \eqref{e4},
and $f'_{2m}$ is the isomorphism in \eqref{ch2}. It is straightforward to check that
$$
q\circ ((f_{2m+1}\vert_{\mathbb F})\circ (f'_{2m})^{-1})\,=\, \text{Id}_{J^{2m}(Q)}\, ,
$$
where $q$ is the projection in \eqref{cd}. Therefore, from the commutativity of the
diagram in \eqref{cd} we conclude that
$$
q'\circ \lambda\circ ((f_{2m+1}\vert_{\mathbb F})\circ (f'_{2m})^{-1})\,=\, \text{Id}_{J^{2m}(Q)}\, ,
$$
where $q'$ and $\lambda$ are the homomorphisms in \eqref{cd}. Consequently, the homomorphism
$$
\lambda\circ ((f_{2m+1}\vert_{\mathbb F})\circ (f'_{2m})^{-1})\,:\, J^{2m}(Q)\, \longrightarrow\,
J^1(J^{2m}(Q))
$$
produces a holomorphic splitting of the short exact sequence
\begin{equation}\label{ecd}
0 \, \longrightarrow \, J^{2m}(Q)\otimes K_X \, \stackrel{\iota'}{\longrightarrow} \,
J^1(J^{2m}(Q))\, \stackrel{q'}{\longrightarrow} \, J^{2m}(Q)\, \longrightarrow\, 0
\end{equation}
in \eqref{cd}. But \eqref{ecd} is the twisted dual of the Atiyah exact sequence for $J^{2m}(Q)$. More
precisely, let
$$
0 \, \longrightarrow \, J^{2m}(Q)^*\otimes J^{2m}(Q) \, \longrightarrow \,J^1(J^{2m}(Q))^*\otimes J^{2m}(Q)
$$
$$
\stackrel{\eta}{\longrightarrow} \,
(J^{2m}(Q)\otimes K_X)^*\otimes J^{2m}(Q)\,=\, \text{End}(J^{2m}(Q))\otimes TX\, \longrightarrow \, 0
$$
be the dual of \eqref{ecd} tensored with $\text{Id}_{J^{2m}(Q)}$. Then $\eta^{-1}(\text{Id}_{J^{2m}(Q)}\otimes TX)
\, \subset\, J^1(J^{2m}(Q))^*\otimes J^{2m}(Q)$ is the Atiyah bundle $\text{At}(J^{2m}(Q))$ of
$J^{2m}(Q)$; furthermore, the short exact sequence
$$
0 \, \longrightarrow \, \text{End}(J^{2m}(Q))\,=\,
J^{2m}(Q)^*\otimes J^{2m}(Q) \, \longrightarrow \,\text{At}(J^{2m}(Q))
\, \stackrel{\eta}{\longrightarrow} \,TX \, \longrightarrow \,0
$$
obtained from the above short exact sequence is in fact the Atiyah exact sequence for $J^{2m}(Q)$.
Consequently, a holomorphic splitting of \eqref{ecd}
is a holomorphic connection on the holomorphic vector bundle $J^{2m}(Q)$
\cite{At}. Therefore, the above homomorphism $\lambda\circ ((f_{2m+1}\vert_{\mathbb F})\circ
(f'_{2m})^{-1})$ is a holomorphic connection on $J^{2m}(Q)$.

The holomorphic connection on $J^{2m}(Q)$
defined by $\lambda\circ ((f_{2m+1}\vert_{\mathbb F})\circ (f'_{2m})^{-1})$
coincides with the holomorphic connection $D_Q$ on $J^{2m}(Q)$ produced by
the connection $D^{\mathbb F}$ on $\mathbb F$ (see \eqref{co1}) using the isomorphism $f'_{2m}$ in \eqref{ch2}.
\end{proof}

Let ${\mathcal L}$ be holomorphic line bundle on $X$ of order two. So the holomorphic line bundle ${\mathcal L}
\otimes {\mathcal L}$ is holomorphically isomorphic to ${\mathcal O}_X$. Fix a holomorphic isomorphism
\begin{equation}\label{rho}
\rho\, :\, {\mathcal L}\otimes {\mathcal L}\, \longrightarrow\, {\mathcal O}_X\, .
\end{equation}
There is a unique holomorphic connection
\begin{equation}\label{dl}
D^{\mathcal L}
\end{equation}
on ${\mathcal L}$ such that the isomorphism
$\rho$ in \eqref{rho} takes the holomorphic connection $D^{\mathcal L}\otimes \text{Id}+\text{Id}\otimes
D^{\mathcal L}$ on ${\mathcal L}\otimes {\mathcal L}$ to the trivial connection on ${\mathcal O}_X$
given by the de Rham differential $d$. It should be clarified that this connection $D^{\mathcal L}$ does
not depend on the choice of the isomorphism $\rho$.

Let $(E,\, B_0,\, {\mathcal F}_{\bullet},\, D)$ be a generalized $\text{SO}(2n, {\mathbb C})$--quasioper
on $X$. Consider the holomorphic vector bundle $E^1\,:=\, E\otimes{\mathcal L}$. Note that
$$\bigwedge\nolimits^{2n}E^1\,=\, (\bigwedge\nolimits^{2n}E)\otimes (\bigwedge\nolimits^{2n}{\mathcal L})\,=\,
\bigwedge\nolimits^{2n}E\,=\, {\mathcal O}_X\, .$$
Since $E^1\otimes E^1\,=\, (E\otimes E)\otimes ({\mathcal L}\otimes {\mathcal L})$, we conclude that
$$B^1_0\,=\, B_0 \otimes\rho$$
is a fiberwise nondegenerate symmetric holomorphic bilinear form on $E^1$, where $\rho$ is the
isomorphism in \eqref{rho}. The filtration ${\mathcal F}_{\bullet}$ of holomorphic subbundles of $E$ produces a
filtration ${\mathcal F}^1_{\bullet}$ of holomorphic subbundles of $E^1$. The $i$--th term $F^1_i$ of
${\mathcal F}^1_{\bullet}$ is simply $F_i\otimes{\mathcal L}$ (see \eqref{e-1b}). Let
\begin{equation}\label{dl2}
D^1\, :=\, D\otimes\text{Id}_{\mathcal L}+ \text{Id}_E\otimes D^{\mathcal L}
\end{equation}
be the holomorphic connection on $E\otimes{\mathcal L}\,=\, E^1$, where $D^{\mathcal L}$ is the
holomorphic connection in \eqref{dl}.

The following lemma is straightforward to prove.

\begin{lemma}\label{lemt}
The quadruple $$(E^1,\, B^1_0,\, {\mathcal F}^1_{\bullet},\, D^1)$$ constructed above is
a generalized ${\rm SO}(2n, {\mathbb C})$--quasioper on $X$.
\end{lemma}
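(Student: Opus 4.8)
The plan is to verify, one condition at a time, each of the requirements entering the definition of a generalized $\text{SO}(2n,{\mathbb C})$--quasioper (Definition \ref{def3}). Several of these are already recorded in the discussion preceding the lemma: it is noted there that $\bigwedge^{2n}E^1\,=\,{\mathcal O}_X$ and that $B^1_0\,=\,B_0\otimes\rho$ is a fiberwise nondegenerate symmetric holomorphic bilinear form on $E^1$, so $(E^1,\, B^1_0)$ is already an $\text{SO}(2n,{\mathbb C})$--bundle. Hence the remaining work is to check that $\{F^1_i\}$ is a filtration in the required sense, that $D^1$ in \eqref{dl2} is a holomorphic connection on $(E^1,\,B^1_0)$, and that $D^1$ satisfies the three conditions of Definition \ref{def2}.

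For the filtration, I would use that tensoring by the fixed line bundle ${\mathcal L}$ is an exact functor preserving ranks; therefore $\text{rank}(F^1_i/F^1_{i-1})\,=\,\text{rank}(F_i/F_{i-1})$ for every $i$, so the two rank conditions are inherited from those of ${\mathcal F}_{\bullet}$. For the perpendicularity relation, the key point is that $B^1_0\,=\,B_0\otimes\rho$ with $\rho$ in \eqref{rho} an isomorphism, whence the annihilator of $F_i\otimes{\mathcal L}$ for $B^1_0$ equals $F^\perp_i\otimes{\mathcal L}\,=\,F_{2m+1-i}\otimes{\mathcal L}\,=\,F^1_{2m+1-i}$, using the relation $F^\perp_i\,=\,F_{2m+1-i}$ for ${\mathcal F}_{\bullet}$. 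That $D^1$ is a holomorphic connection on the vector bundle $E^1$ is the standard tensor-product connection, which satisfies the Leibniz identity; that it preserves $B^1_0$ follows by combining the compatibility of $D$ with $B_0$ and the defining property \eqref{dl} of $D^{\mathcal L}$, namely that $D^{\mathcal L}\otimes\text{Id}+\text{Id}\otimes D^{\mathcal L}$ corresponds under $\rho$ to the de Rham differential on ${\mathcal O}_X$.

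The heart of the argument is the three conditions of Definition \ref{def2}, and here the governing observation is that the summand $\text{Id}_E\otimes D^{\mathcal L}$ of $D^1$ is block-diagonal with respect to the filtration. First, Griffiths transversality $D^1(F^1_i)\,\subset\,F^1_{i+1}\otimes K_X$ holds because $D(F_i)\,\subset\,F_{i+1}\otimes K_X$ and $(\text{Id}_E\otimes D^{\mathcal L})(F_i\otimes{\mathcal L})\,\subset\,(F_i\otimes{\mathcal L})\otimes K_X\,\subset\,F^1_{i+1}\otimes K_X$. More importantly, since $\text{Id}_E\otimes D^{\mathcal L}$ maps each $F^1_i\,=\,F_i\otimes{\mathcal L}$ into $F^1_i\otimes K_X$, it contributes nothing to the induced map $F^1_i/F^1_{i-1}\,\longrightarrow\,(F^1_{i+1}/F^1_i)\otimes K_X$ of Definition \ref{second}. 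Consequently the second fundamental forms satisfy $S(D^1,i)\,=\,S(D,i)\otimes\text{Id}_{\mathcal L}$, and likewise the composition in condition (3) is the corresponding composition for $D$ tensored with $\text{Id}_{\mathcal L}$. Since tensoring by a line bundle carries isomorphisms to isomorphisms, conditions (2) and (3) for $D^1$ follow at once from those for $D$.

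I do not expect a genuine obstacle, which is why the lemma is asserted to be straightforward; the only point requiring real care is the identification $S(D^1,i)\,=\,S(D,i)\otimes\text{Id}_{\mathcal L}$, that is, checking precisely that the $D^{\mathcal L}$--contribution preserves each filtration piece and therefore drops out of the off-diagonal second fundamental form. Once this is pinned down, every remaining verification is pure functoriality of the operation of tensoring by the fixed order-two line bundle ${\mathcal L}$.
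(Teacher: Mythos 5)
Your proof is correct and is precisely the verification the paper has in mind: the paper offers no written proof (it simply declares Lemma \ref{lemt} straightforward), and your argument carries out the intended routine check of Definition \ref{def3}. In particular, you correctly isolate the one point of substance — that $\text{Id}_E\otimes D^{\mathcal L}$ maps each $F^1_i$ into $F^1_i\otimes K_X$ and therefore drops out of the second fundamental forms, giving $S(D^1,i)\,=\,S(D,i)\otimes\text{Id}_{\mathcal L}$ — so conditions (2) and (3) of Definition \ref{def2} are inherited from $D$.
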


The holomorphic vector bundle ${\mathbb F}\,=\, \widehat{D}^{2m}_E(F_1)$ in \eqref{e6}
has the following filtration of holomorphic subbundles 
\begin{equation}\label{e7}
0\, \, \subset\, F_1\, \subset\, \widehat{D}_E(F_1)\, \subset\, \widehat{D}^2_E(F_1)\,
\subset\, \cdots\, \subset\, \widehat{D}^{2m-1}_E(F_1)\, \subset\, \widehat{D}^{2m}_E(F_1)
\,=\, {\mathbb F}\, .
\end{equation}
From Definition \ref{def2} it follows that the filtration of $\mathbb F$ in \eqref{e7}
coincides with the filtration of $\mathbb F$ obtained by intersecting the filtration
of $E$ in \eqref{e-1b} with the subbundle ${\mathbb F}$ of $E$. Moreover, the isomorphism
$f'_{2m}$ in \eqref{ch2} takes the
filtration of $\mathbb F$ in \eqref{e7} to the filtration of $J^{2m}(Q)$ given by the short exact
sequence of jet bundles
\begin{equation}\label{js}
0\, \longrightarrow\, Q\otimes K^{\otimes i}_X\, \longrightarrow\,
J^{i}(Q)\, \longrightarrow\, J^{i-1}(Q)\, \longrightarrow\, 0
\end{equation}
for $i\, \geq\, 1$.
More precisely, for any $1\, \leq\, j\, \leq\, 2m-1$, the subbundle $\widehat{D}^j_E(F_1)$ in \eqref{e7}
corresponds to the kernel of the projection $J^{2m}(Q)\, \longrightarrow\, J^{2m-j-1}(Q)$
by the isomorphism $f'_{2m}$ in \eqref{ch2}.

Let
\begin{equation}\label{j}
0\, \longrightarrow\, Q\otimes K^{\otimes 2m}_X\, \longrightarrow\,
{\mathbb F}\,=\, J^{2m}(Q)\, \longrightarrow\, J^{2m-1}(Q)\, \longrightarrow\, 0
\end{equation}
be the short exact sequence of jet bundles where,
$\mathbb F$ is identified with $J^{2m}(Q)$ using the isomorphism $f'_{2m}$ in \eqref{ch2}.

As explained before, the connection
$D$ on $E$ need not preserve the subbundle $\mathcal S$ in \eqref{s}. Consider the decomposition
$E\,=\, {\mathbb F}\oplus{\mathcal S}$ in Lemma \ref{lem1}.
Assume that 
\begin{equation}\label{a1}
\widehat{D}_E({\mathcal S})\, =\, {\mathcal S}\oplus (Q\otimes K^{\otimes 2m}_X)
\, \subset\, {\mathcal S}\oplus {\mathbb F}\,=\, E\, ,
\end{equation}
where $Q\otimes K^{\otimes 2m}_X$ is the subbundle of $\mathbb F$ in \eqref{j}, and $\widehat{D}_E
({\mathcal S})\, \subset\, E$ is the holomorphic subbundle given by Lemma \ref{3.1}. Then the second
fundamental form $S(D;{\mathcal S})$ of $\mathcal S$ for the connection $D$ is a holomorphic section 
\begin{equation}\label{sf}
S(D;{\mathcal S})\, \in\, H^0(X,\, \text{Hom}({\mathcal S},\, Q\otimes K^{\otimes (2m+1)}_X))
\end{equation}
$$
\subset\, H^0(X,\, \text{Hom}({\mathcal S},\, {\mathbb F}))\,=\,
H^0(X,\, \text{Hom}({\mathcal S},\,E/{\mathcal S}))\, ;
$$
note that Lemma \ref{lem1} identifies $E/{\mathcal S}$ with ${\mathbb F}$.

\section{Generalized ${\rm SO}(2n,{\mathbb C})$-opers and projective structures}\label{opers}

Through the construction of generalized ${\rm SO}(2n,{\mathbb C})$-quasiopers in Definition 
\ref{def3} and that of generalized $B$-opers in \cite[Definition 2.11]{BSY} we define a 
generalized ${\rm SO}(2n,{\mathbb C})$-oper.

\begin{definition}\label{def4}
A \textit{generalized} ${\rm SO}(2n,{\mathbb C})$--{\it oper} on $X$ is a
generalized $\text{SO}(2n, {\mathbb C})$--quasioper
$(E,\, B_0,\, {\mathcal F}_{\bullet},\, D)$ on $X$ (see Definition \ref{def3})
satisfying the following three conditions:
\begin{enumerate}
\item ${\mathcal S}\,=\, Q\otimes K^{\otimes m}_X$, where $\mathcal S$ and
$Q$ are defined in \eqref{s} and \eqref{e2} respectively,

\item $\widehat{D}({\mathcal S})\, =\, {\mathcal S}\oplus (Q\otimes K^{\otimes 2m}_X)$
(see \eqref{a1} for this condition), and

\item there is a holomorphic section $$\phi\, \in\, H^0(X,\, K^{\otimes (m+1)}_X)$$ such
that the second fundamental form $S(D;{\mathcal S})$ in \eqref{sf} is:
$$
S(D;{\mathcal S})\,=\, \text{Id}_Q\otimes\phi\, .
$$
Note that using the isomorphism ${\mathcal S}\,=\, Q\otimes K^{\otimes m}_X$ in (1), the
second fundamental form $S(D;{\mathcal S})$ in \eqref{sf} is a holomorphic section of
$$
\text{Hom}(Q\otimes K^{\otimes m}_X,\, Q\otimes K^{\otimes (2m+1)}_X)\,=\,
\text{End}(Q)\otimes K^{\otimes (m+1)}_X\, ;
$$
the condition says that this section $S(D;{\mathcal S})$ coincides with $\text{Id}_Q\otimes\phi$.
\end{enumerate}

Two generalized ${\rm SO}(2n,{\mathbb C})$--opers are called \textit{isomorphic} if the
underlying generalized $\text{SO}(2n, {\mathbb C})$--quasiopers are isomorphic (see
Definition \ref{def3}).
\end{definition}

The following lemma is straightforward to prove.

\begin{lemma}\label{lemt2}
Take a holomorphic line bundle $\mathcal L$ on $X$ of order two, and fix a holomorphic
isomorphism $\rho$ as in \eqref{rho}. Let $(E,\, B_0,\, {\mathcal F}_{\bullet},\, D)$ be a generalized
${\rm SO}(2n,{\mathbb C})$--oper on $X$. Then the generalized ${\rm SO}(2n, {\mathbb C})$--quasioper
$(E^1,\, B^1_0,\, {\mathcal F}^1_{\bullet},\, D^1)$ in Lemma \ref{lemt} is also
a generalized ${\rm SO}(2n,{\mathbb C})$--oper.
\end{lemma}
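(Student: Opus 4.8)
The plan is to check that the twisted quadruple $(E^1,\, B^1_0,\, {\mathcal F}^1_{\bullet},\, D^1)$ satisfies the three conditions of Definition \ref{def4}; by Lemma \ref{lemt} it is already a generalized ${\rm SO}(2n,{\mathbb C})$--quasioper, so only those conditions remain to be verified. The whole argument rests on a single naturality principle: every construction feeding into Definition \ref{def4}---the operator $\widehat{D}_W(-)$ of Lemma \ref{3.1}, the subbundle $\mathbb F$ of \eqref{e6}, the subbundle $\mathcal S$ of \eqref{s}, the quotient $Q$ of \eqref{e2}, and the second fundamental form $S(D;{\mathcal S})$ of \eqref{sf}---is compatible with tensoring by the flat line bundle $({\mathcal L},\, D^{\mathcal L})$.

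First I would establish the key naturality lemma: for any holomorphic bundle $W$ with holomorphic connection $D_W$ and any holomorphic subbundle $V\,\subset\,W$, setting $D^1_W\,:=\,D_W\otimes\text{Id}_{\mathcal L}+\text{Id}_W\otimes D^{\mathcal L}$ on $W\otimes{\mathcal L}$, one has
$$
\widehat{D^1_W}(V\otimes{\mathcal L})\,=\,\widehat{D}_W(V)\otimes{\mathcal L}\, ,
$$
the left-hand subbundle being formed inside $W\otimes{\mathcal L}$ as in Lemma \ref{3.1}. The point is that the extra Leibniz term $\text{Id}_W\otimes D^{\mathcal L}$ carries $V\otimes{\mathcal L}$ into $V\otimes{\mathcal L}\otimes K_X$, and hence dies upon projecting to $(W/V)\otimes{\mathcal L}\otimes K_X$; therefore the second fundamental form of $V\otimes{\mathcal L}$ for $D^1_W$ equals $S(D_W;V)\otimes\text{Id}_{\mathcal L}$. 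Since tensoring by the line bundle ${\mathcal L}$ is exact and commutes with the torsion/saturation operation defining $\widehat{D}$ in Lemma \ref{3.1}, the displayed identity follows, and iterating it the $k$-fold construction yields $\widehat{D}^k_W(V)\otimes{\mathcal L}$ on the right for every $k$.

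Applying this with $(W,\, D_W)\,=\,(E,\, D)$ and $V\,=\,F_1$ gives ${\mathbb F}^1\,=\,{\mathbb F}\otimes{\mathcal L}$, and since $B^1_0\,=\,B_0\otimes\rho$, the orthogonal complement of Lemma \ref{lem1} is likewise twisted, so ${\mathcal S}^1\,=\,({\mathbb F}^1)^\perp\,=\,{\mathcal S}\otimes{\mathcal L}$; clearly $Q^1\,=\,E^1/F^1_{2m}\,=\,Q\otimes{\mathcal L}$ as well. Condition (1) then reads ${\mathcal S}^1\,=\,{\mathcal S}\otimes{\mathcal L}\,=\,(Q\otimes K^{\otimes m}_X)\otimes{\mathcal L}\,=\,Q^1\otimes K^{\otimes m}_X$, and condition (2) follows from $\widehat{D^1}({\mathcal S}^1)\,=\,\widehat{D}({\mathcal S})\otimes{\mathcal L}\,=\,({\mathcal S}\oplus(Q\otimes K^{\otimes 2m}_X))\otimes{\mathcal L}\,=\,{\mathcal S}^1\oplus(Q^1\otimes K^{\otimes 2m}_X)$, the direct sum being preserved because tensoring by a line bundle respects direct sums.

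For condition (3), the second fundamental form formula above gives $S(D^1;{\mathcal S}^1)\,=\,S(D;{\mathcal S})\otimes\text{Id}_{\mathcal L}$; under the canonical identification $\text{End}(Q^1)\,=\,\text{End}(Q\otimes{\mathcal L})\,=\,\text{End}(Q)$, which sends $\text{Id}_{Q^1}$ to $\text{Id}_Q$, this transports $\text{Id}_Q\otimes\phi$ to $\text{Id}_{Q^1}\otimes\phi$. Hence the same section $\phi\,\in\,H^0(X,\, K^{\otimes(m+1)}_X)$ witnesses condition (3) for the twisted quasioper, completing the verification. The only delicate point is the naturality lemma of the second paragraph; once the vanishing of the Leibniz contribution to the second fundamental form is recorded, every remaining step is a direct transcription, which is why the statement is asserted to be straightforward.
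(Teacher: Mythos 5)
Your proposal is correct: the paper offers no proof of this lemma (it is simply declared ``straightforward to prove''), and your naturality argument --- that tensoring by the flat pair $({\mathcal L},\, D^{\mathcal L})$ kills the Leibniz contribution to second fundamental forms, hence commutes with the saturation operation $\widehat{D}_W(\,\cdot\,)$ of Lemma \ref{3.1} and so with the formation of ${\mathbb F}$, ${\mathcal S}$, $Q$ and $S(D;{\mathcal S})$ --- is exactly the verification the authors leave implicit. The only detail worth recording explicitly is that the twisted jet identification $(f^1)'_{2m}$ matches $f'_{2m}\otimes\mathrm{Id}_{\mathcal L}$ (flat sections of $(E\otimes{\mathcal L},\,D^1)$ are locally products of flat sections), so that $F_1\otimes{\mathcal L}$ is indeed the subbundle $Q^1\otimes K_X^{\otimes 2m}$ appearing in conditions (2) and (3); this follows from the paper's discussion around \eqref{e7}--\eqref{j} applied to the quasioper of Lemma \ref{lemt}.
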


Fix integers $n$ and $m$ as in Definition \ref{def1}; note that $r\,:=\,n/(m+1)$ is an integer, in fact
it is the rank of $F_1$ in \eqref{e-1}. Let
$$
{\mathbb O}_X(n,m)
$$
denote the space of all isomorphism classes of generalized ${\rm SO}(2n,{\mathbb C})$--opers on $X$
of filtration length $2m+1$ (see Definition \ref{def1} and Definition \ref{def4}).

Let
$$
J(X)_2\, \subset\, \text{Pic}^0(X)
$$
be the group of holomorphic line bundles on $X$ of order two; it is isomorphic
to $({\mathbb Z}/2{\mathbb Z})^{\oplus 2g}$, where $g\,=\, \text{genus}(X)$.

Let
\begin{equation}\label{cx}
{\mathcal C}_X
\end{equation}
be the space of all isomorphism classes of holomorphic
$\text{SO}(r, {\mathbb C})$--bundles on $X$ equipped with a holomorphic connection. So ${\mathcal C}_X$
in \eqref{cx} parametrizes isomorphism classes of pairs $(V,\, B_V)$, where $V$ is a holomorphic vector bundle
on $X$ of rank $r$ with $\bigwedge^r V\,=\, {\mathcal O}_X$, and
$B_V\, \in\, H^0(X, \, \text{Sym}^2(V^*))$
is a fiberwise nondegenerate symmetric bilinear form on $V$. We recall that a holomorphic connection on
$(V,\, B_V)$ is a holomorphic connection $D_V$ on $V$ such that
$$
\partial B_V(s,\, t)\,=\, B_V(D_V(s),\, t) + B_V(s,\, D_V(t))
$$
for all locally defined holomorphic sections $s$ and $t$ of $V$. Let
$$
{\mathfrak P}(X)
$$
be the space of all projective structures on $X$; see \cite{Gu}, \cite{Bi1} for projective
structures on $X$. Then, one has the following correspondence between generalized ${\rm SO}(2n,
{\mathbb C})$--opers and geometric structures.

\begin{theorem}\label{thm1}
First assume that the integer $r\,=\,n/(m+1)$ is odd. 
There is a canonical bijection between ${\mathbb O}_X(n,m)$ and the Cartesian product
$$
{\mathcal C}_X\times {\mathfrak P}(X)\times \left(H^0(X,\, K^{\otimes (m+1)}_X)\oplus
\left(\bigoplus_{i=2}^m H^0(X,\, K^{\otimes 2i}_X)\right)\right)\times J(X)_2\ .
$$

If $r$ is even, then
there is a canonical bijection between ${\mathbb O}_X(n,m)$ and the Cartesian product
$$
{\mathcal C}_X\times {\mathfrak P}(X)\times \left(H^0(X,\, K^{\otimes (m+1)}_X)\oplus
\left(\bigoplus_{i=2}^m H^0(X,\, K^{\otimes 2i}_X)\right)\right)\ .
$$
\end{theorem}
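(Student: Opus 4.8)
The plan is to define the bijection by decomposing an oper into an \emph{internal} orthogonal part, a \emph{principal} part encoding a projective structure, and a collection of higher invariant differentials, and then to read off the parity dichotomy from the determinant line bundle of the internal part. I would fix a generalized $\mathrm{SO}(2n,\mathbb{C})$--oper $(E,\, B_0,\, {\mathcal F}_{\bullet},\, D)$ and invoke Lemma \ref{lem1} to obtain the orthogonal decomposition $E\,=\,{\mathbb F}\oplus{\mathcal S}$, where ${\mathbb F}$ is identified with $J^{2m}(Q)$ through the isomorphism $f'_{2m}$ in \eqref{ch2}, and, by condition (1) of Definition \ref{def4}, ${\mathcal S}\,=\,Q\otimes K^{\otimes m}_X$. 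By Corollary \ref{cor2} the induced connection $D^{\mathcal S}$ preserves $B_{\mathcal S}$, so $({\mathcal S},\, B_{\mathcal S},\, D^{\mathcal S})$ is a rank-$r$ bundle carrying a nondegenerate symmetric form and a compatible connection, that is, an $\mathrm{O}(r,\mathbb{C})$--bundle with connection. Its determinant $\tau\,:=\,\det{\mathcal S}$ satisfies $\tau^{\otimes 2}\,\cong\,{\mathcal O}_X$, hence $\tau\,\in\, J(X)_2$, and the connection induced on $\tau$ is the canonical one of \eqref{dl}. This is the unique place where the reduction to ${\mathcal C}_X$, which demands trivial determinant, can fail, and it is the origin of the parity phenomenon.

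Next I would extract the remaining factors. By Proposition \ref{dq} the connection $D^{\mathbb F}$ transports to a connection $D_Q$ on $J^{2m}(Q)$ which, together with the jet filtration \eqref{e7} and the conditions of Definition \ref{def2}, meets the hypotheses of the jet-bundle/projective-structure correspondence of \cite{Bi1} and \cite{BSY}; its degree-two symbol is independent of $Q$ and yields a well-defined projective structure, producing the factor ${\mathfrak P}(X)$. The self-duality of the oper established in Proposition \ref{dual} forces the higher symbols to be symmetric, so only even-degree differentials survive; these are precisely the invariants attached to the exponents $3,5,\ldots,2m-1$ of $\mathrm{SO}(2(m+1),\mathbb{C})$, yielding $\bigoplus_{i=2}^m H^0(X,\, K^{\otimes 2i}_X)$, while condition (3) of Definition \ref{def4} records the distinguished differential $\phi\,\in\, H^0(X,\, K^{\otimes(m+1)}_X)$ associated with the remaining exponent $m$. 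Conversely, given $(V,\, B_V,\, D_V)\,\in\,{\mathcal C}_X$, a projective structure, a section $\phi$, and the even differentials, I would reconstruct the oper by setting $Q\,=\, V\otimes K^{\otimes -m}_X$, so that ${\mathcal S}\,=\, V$ and $\tau$ is trivial, building ${\mathbb F}\,=\, J^{2m}(Q)$ with the form induced from $B_{\mathcal S}$ through the jet filtration, and defining $D$ on $E\,=\,{\mathbb F}\oplus{\mathcal S}$ from the projective structure (the principal part), from $D_V$ (the internal part), from $\phi$ (the second fundamental form of ${\mathcal S}$, see \eqref{a1}), and from the even differentials (the higher off-diagonal terms). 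Checking the conditions of Definitions \ref{def2} and \ref{def4} together with $B_0$--compatibility is a routine but lengthy verification, and shows that the two constructions are mutually inverse over the locus where $\tau$ is trivial.

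Finally I would treat the parity dichotomy, which I expect to be the main obstacle. The reconstruction above produces exactly the opers with $\det{\mathcal S}\,\cong\,{\mathcal O}_X$, so it remains to analyze the assignment $[(E,\, B_0,\, {\mathcal F}_{\bullet},\, D)]\,\longmapsto\,\tau\,=\,\det{\mathcal S}$ under the twisting of Lemma \ref{lemt2}. Twisting by ${\mathcal L}\,\in\, J(X)_2$ replaces ${\mathcal S}$ by ${\mathcal S}\otimes{\mathcal L}$, hence replaces $\tau$ by $\tau\otimes{\mathcal L}^{\otimes r}$. If $r$ is odd then ${\mathcal L}^{\otimes r}\,\cong\,{\mathcal L}$, so the map $[(E,\, B_0,\, {\mathcal F}_{\bullet},\, D)]\,\longmapsto\, ([(E\otimes\tau,\ldots)],\, \tau)$ is a bijection of ${\mathbb O}_X(n,m)$ with the locus $\{\tau\,\cong\,{\mathcal O}_X\}\times J(X)_2$, its inverse being $([\text{oper}_0],\, \tau)\,\longmapsto\,[\text{oper}_0\otimes\tau]$; combined with the previous paragraph this yields the extra $J(X)_2$ factor. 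If $r$ is even then ${\mathcal L}^{\otimes r}\,\cong\,{\mathcal O}_X$, so twisting leaves $\tau$ fixed; one must then show that $\tau$ is \emph{forced} to be trivial, equivalently that the $\mathrm{O}(r,\mathbb{C})$--structure on ${\mathcal S}$ reduces to $\mathrm{SO}(r,\mathbb{C})$, whence ${\mathbb O}_X(n,m)\,=\,\{\tau\,\cong\,{\mathcal O}_X\}$ and no $J(X)_2$ factor appears (the twisting then acts freely within the ${\mathcal C}_X$--factor via $V\,\longmapsto\, V\otimes{\mathcal L}$, which preserves $\det V$ for even $r$). Establishing this triviality of $\tau$ for even $r$, through the orientation of the rank-$2r$ middle quotient $F_{m+1}/F_m$ and the behaviour of its two half-spin, or ruling, components, is exactly the even-orthogonal subtlety highlighted in the introduction, and is the delicate point of the argument.
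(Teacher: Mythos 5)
Your proposal follows essentially the same route as the paper's proof: the reverse direction rests on the decomposition $E\,=\,{\mathbb F}\oplus{\mathcal S}$ of Lemma \ref{lem1} together with the identification \eqref{ch2}, the factors ${\mathfrak P}(X)$ and $\bigoplus_{i=2}^m H^0(X,K_X^{\otimes 2i})$ are pulled out of the generalized $B$--oper structure on the ${\mathbb F}$--part via \cite[Theorem~4.6]{BSY}, the section $\phi$ comes from condition (3) of Definition \ref{def4}, and the $J(X)_2$--factor is a determinant line bundle; the forward direction is the same coupling construction. The differences are bookkeeping: you record $\tau\,=\,\det{\mathcal S}$ and twist the whole oper by $\tau$, whereas the paper records ${\mathcal L}\,=\,\det{\mathbb F}$ in \eqref{dt} and twists only ${\mathbb F}$ before invoking \cite[Theorem~4.6]{BSY}; since $\det{\mathbb F}\otimes\det{\mathcal S}\,=\,\det E\,=\,{\mathcal O}_X$ and both determinants are $2$--torsion (both $B_{\mathbb F}$ and $B_{\mathcal S}$ are nondegenerate), these are the same line bundle, and your odd-$r$ twisting bijection is exactly the paper's use of Lemma \ref{lemt2}. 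Likewise, reading the ${\mathcal C}_X$--datum directly off $({\mathcal S},B_{\mathcal S},D^{\mathcal S})$ (via Corollary \ref{cor2}) instead of off the $B$--oper on ${\mathbb F}'$ is an equivalent normalization. For odd $r$ your argument is correct modulo the routine verifications both you and the paper defer.

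The incomplete point is the one you flagged yourself: for even $r$ you reduce everything to the claim that $\tau$ is forced to be trivial, and you do not prove it. Be aware that the paper does not prove it either. Its entire even-$r$ reverse direction is the assertion that $({\mathbb F},B_{\mathbb F},\widetilde{\mathcal F}_\bullet,D^{\mathbb F})$ ``is already a generalized $B$--oper'', i.e.\ that $\det{\mathbb F}\,=\,{\mathcal O}_X$ automatically when $r$ is even, with no argument; and its forward-direction remark for even $r$ (that $\det(V\otimes{\mathcal L})\,=\,\det V\otimes{\mathcal L}^{\otimes r}\,=\,\det V$, so twisting is absorbed into ${\mathcal C}_X$) only shows that a separate $J(X)_2$--factor would be redundant, not that the forward map is surjective onto ${\mathbb O}_X(n,m)$ --- which is precisely your question about $\tau$. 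Moreover the assertion deserves scrutiny: the paper's forward construction appears to accept any rank-$r$ bundle $V$ with nondegenerate symmetric $B_V$ and compatible connection, with no use of $\bigwedge^r V\,=\,{\mathcal O}_X$ (one still gets $\det E\,=\,(\det V)^{\otimes(2m+2)}\,=\,{\mathcal O}_X$ because $\det V$ is $2$--torsion), and when $r$ is even such a $V$ with nontrivial determinant cannot be renormalized by any twist, since ${\mathcal L}^{\otimes r}\,=\,{\mathcal O}_X$. So either some condition of Definition \ref{def4} excludes the resulting opers --- neither you nor the paper exhibits one --- or the even-$r$ statement needs ${\mathcal C}_X$ replaced by orthogonal (not special orthogonal) bundles with connection. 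In short, you have not missed a step that exists in the paper; you have isolated the step the paper itself leaves unestablished, and any complete write-up must actually supply it.
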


\begin{proof}
Assume that $r\,=\,n/(m+1)$ is an odd integer. Take an element
\begin{equation}\label{te}
(\alpha,\, \beta,\, \gamma,\, \delta,\, {\mathcal L})
\end{equation}
in 
$$
{\mathcal C}_X\times {\mathfrak P}(X)\times \left(H^0(X,\, K^{\otimes (m+1)}_X)\oplus
\left(\bigoplus_{i=2}^m H^0(X,\, K^{\otimes 2i}_X)\right)\right)\times J(X)_2\, ,
$$
such that \begin{itemize}
\item $\alpha\, =\, (V,\, B_V,\, D_V)\, \in\, {\mathcal C}_X$, where $(V,\, B_V)$
is a holomorphic $\text{SO}(r,{\mathbb C})$--bundle
on $X$ equipped with a holomorphic connection $D_V$,

\item $\beta$ is a projective structure on $X$,

\item $\gamma$ is a holomorphic section
\begin{equation}\label{g}
\gamma\,\in\, H^0(X,\, K^{\otimes (m+1)}_X)\, ,
\end{equation}

\item $\delta\,\in\,
\bigoplus_{i=2}^m H^0(X,\, K^{\otimes 2i}_X)$, and

\item $\mathcal L$ is a holomorphic line bundle on $X$ of order two.
\end{itemize}
Using \cite[Theorem 4.6]{BSY}, the triple
$(\alpha,\, \beta,\, \delta)$ produces the following:
\begin{itemize}
\item a nondegenerate holomorphic symmetric bilinear form $B_1$ on $J^{2m}(V\otimes K^{-\otimes m}_X)$, and

\item a holomorphic connection $\textbf{D}_1$ on $J^{2m}(V\otimes K^{-\otimes m}_X)$ that preserves the
bilinear form $B_1$.
\end{itemize}
Furthermore, the triple $(J^{2m}(V\otimes K^{-\otimes m}_X),\, B_1,\, \textbf{D}_1)$, together with the
filtration of \linebreak $J^{2m}(V\otimes K^{-\otimes m}_X)$ given by 
\begin{equation}\label{f1}
0\, =\, A_0\, \subset\, A_1\, \subset\, A_2 \, \subset\, \cdots \, \subset\, A_{2m} \, \subset\,
A_{2m+1}\,=\, J^{2m}(V\otimes K^{-\otimes m}_X)\, ,
\end{equation}
where $A_i$ is the kernel of the natural projection $J^{2m}(V\otimes K^{-\otimes m}_X)\, \longrightarrow\,
J^{2m-i}(V\otimes K^{-\otimes m}_X)$, define a generalized $B$--oper (see \cite[Definition 2.11]{BSY}
and \cite[Theorem 4.6]{BSY}).

Now consider the holomorphic vector bundle $$E\, :=\, J^{2m}(V\otimes K^{-\otimes m}_X)\oplus V$$ on $X$.
Note that it is equipped with nondegenerate holomorphic symmetric bilinear form $B_1\oplus B_V$. The
holomorphic connection $\textbf{D}_1$ on $J^{2m}(V\otimes K^{-\otimes m}_X)$ and the holomorphic
connection $D_V$ on $V$ together produce the holomorphic connection $\textbf{D}_1\oplus D_V$ on $
J^{2m}(V\otimes K^{-\otimes m}_X)\oplus V\,=\, E$. This
connection $\textbf{D}_1\oplus D_V$ on $E$ evidently preserves the bilinear form $B_1\oplus B_V$ on $E$.

Using the holomorphic connection $\textbf{D}_1\oplus D_V$ on $E$
and the section $\gamma$ in \eqref{g}, we shall construct another holomorphic connection on $E$. Since
$E\, =\, J^{2m}(V\otimes K^{-\otimes m}_X)\oplus V$, using the filtration in \eqref{f1}, we have
\begin{equation}\label{z1}
\text{Hom}(V,\, V\otimes K^{\otimes m}_X)\,=\,\text{Hom}(V,\, A_1)\, \subset\,
\text{Hom}(V,\, J^{2m}(V\otimes K^{-\otimes m}_X))\, ;
\end{equation}
in \eqref{f1}, note that $A_1\,=\, V\otimes K^{\otimes m}_X$. Similarly, we have
\begin{equation}\label{z2}
\text{Hom}(V\otimes K^{-\otimes m}_X,\, V)\,=\,
\text{Hom}(A_{2m+1}/A_{2m},\, V)\, \subset\,
\text{Hom}(J^{2m}(V\otimes K^{-\otimes m}_X),\, V)\, ;
\end{equation}
in \eqref{f1}, note that $$A_{2m+1}/A_{2m}\,=\, V\otimes K^{-\otimes m}_X\, ,$$ so
the quotient map $A_{2m+1}\, \longrightarrow\, A_{2m+1}/A_{2m}$ produces the inclusion
map $$\text{Hom}(A_{2m+1}/A_{2m},\, V)\, \hookrightarrow\,
\text{Hom}(J^{2m}(V\otimes K^{-\otimes m}_X),\, V)\, .$$ On the other hand,
$$
\text{Hom}(V,\, J^{2m}(V\otimes K^{-\otimes m}_X))\oplus \text{Hom}(J^{2m}(V\otimes K^{-\otimes m}_X),\, V)
$$
$$
\subset\, \text{End}(J^{2m}(V\otimes K^{-\otimes m}_X)\oplus V)\,=\, \text{End}(E)\, .
$$
Hence from \eqref{z1}, \eqref{z2} we conclude that
\begin{equation}\label{i1}
(\text{End}(V)\otimes K^{\otimes m}_X)\oplus (\text{End}(V)\otimes K^{\otimes m}_X)
\,=\,\text{Hom}(V,\, V\otimes K^{\otimes m}_X)\oplus\text{Hom}(V\otimes K^{-\otimes m}_X,\, V)
\end{equation}
$$
\subset\, \text{Hom}(V,\, J^{2m}(V\otimes K^{-\otimes m}_X))\oplus \text{Hom}(J^{2m}(V\otimes K^{-\otimes m}_X),\, V)
\, \subset\, \text{End}(E)\, .
$$
From \eqref{i1} it follows immediately that
\begin{equation}\label{i2}
(\text{Id}_V\otimes\gamma,\, -\text{Id}_V\otimes\gamma)\, \in\, H^0(X, \, \text{End}(E)\otimes K_X)\, ,
\end{equation}
where $\gamma$ is the section in \eqref{g}.

Any two holomorphic connections on the holomorphic vector bundle $E$ differ by a holomorphic
section of $\text{End}(E)\otimes K_X$. Since $\textbf{D}_1\oplus D_V$ is a holomorphic connection on $E$, from
\eqref{i2} we conclude that
\begin{equation}\label{de}
\textbf{D}_E\,:=\, (\textbf{D}_1\oplus D_V)+(\text{Id}_V\otimes\gamma,\, -\text{Id}_V\otimes\gamma)
\end{equation}
is a holomorphic connections on the holomorphic vector bundle $E$. Since the connection $\textbf{D}_1\oplus D_V$
on $E$ preserves the bilinear form $B_1\oplus B_V$ on $E$, from the construction of $(\text{Id}_V
\otimes\gamma,\, -\text{Id}_V\otimes\gamma)\, \in\, H^0(X, \text{End}(E)\otimes K_X)$ in \eqref{i2} it follows
that the connection $\textbf{D}_E$ on $E$ in \eqref{de} also preserves the bilinear form $B_1\oplus B_V$ on $E$.

Using the filtration of $J^{2m}(V\otimes K^{-\otimes m}_X)$ in \eqref{f1} we shall construct
a filtration of holomorphic subbundles on $E$. Let
\begin{equation}\label{f2}
0\, =\, A'_0\, \subset\, A'_1\, \subset\, A'_2 \, \subset\, \cdots \, \subset\, A'_{2m} \, \subset\,
A'_{2m+1}\,=\, E\,=\, J^{2m}(V\otimes K^{-\otimes m}_X)\oplus V
\end{equation}
be the filtration where, $A'_i\,=\, A_i\oplus 0$ for all $0\, \leq\, i\,\leq\, m$ and
$A'_i\,=\, A_i\oplus V$ for all $m+1\, \leq\, i\,\leq\, 2m+1$.

From the above, we have that the holomorphic vector bundle $E$,
the bilinear form $B_1\oplus B_V$,
the filtration $\{A'_i\}_{i=0}^{2m+1}$ in \eqref{f2}, and
 the holomorphic connection $\textbf{D}_E$ in \eqref{de}
together
define a generalized ${\rm SO}(2n,{\mathbb C})$--oper.

In view of Lemma \ref{lemt2}, the above generalized ${\rm SO}(2n,{\mathbb C})$--oper
$$(E,\, B_1\oplus B_V,\, \{A'_i\}_{i=0}^{2m+1},\, \textbf{D}_E)$$ and the line bundle $\mathcal L$ in \eqref{te}
together produce a generalized ${\rm SO}(2n,{\mathbb C})$--oper.
It is evident that this generalized ${\rm SO}(2n,
{\mathbb C})$--oper is an element of ${\mathbb O}_X(n,m)$.

Now assume that the integer $r$ is even. Let $V$ be a holomorphic vector bundle on $X$ of rank $r$, and let $B_V\, 
\in\, H^0(X, \, \text{Sym}^2(V^*))$ is a fiberwise nondegenerate symmetric bilinear form on $V$. Then we have 
$\bigwedge^r V\,=\, {\mathcal O}_X$, because $r$ is even. Therefore, if $(V,\, B_V)$ is an holomorphic 
$\text{SO}(r,{\mathbb C})$--bundle, then for any ${\mathcal L}\, \in\, J(X)_2$, that pair $(V\otimes{\mathcal 
L},\, B_V\otimes\rho)$, where $\rho\, :\, {\mathcal L}^{\otimes 2} \, \longrightarrow\, {\mathcal O}_X$ is an 
isomorphism (as in \eqref{rho}), is again a holomorphic $\text{SO}(r,{\mathbb C})$--bundle. Hence in the case of 
even $r$, when we consider ${\mathcal C}_X$, tensoring with line bundles of order two are already taken into 
account, so we no longer need to take line bundles of order two separately (which was needed in the previous case 
of $r$ being odd).

Therefore, the above constructions identify
${\mathbb O}_X(n,m)$ with
$$
{\mathcal C}_X\times {\mathfrak P}(X)\times \left(H^0(X,\, K^{\otimes (m+1)}_X)\oplus
\left(\bigoplus_{i=2}^m H^0(X,\, K^{\otimes 2i}_X)\right)\right)\, .
$$

We shall now describe the reverse construction. Again first assume that the integer $r$ is odd.

Let $$(E,\, B_0,\, {\mathcal F}_{\bullet},\, D)\, \in\,
{\mathbb O}_X(n,m)$$ be a generalized ${\rm SO}(2n,{\mathbb C})$--oper on $X$.
Consider the decomposition
$$
{\mathbb F}\oplus{\mathcal S}\, =\, E
$$
in Lemma \ref{lem1}. As noted in \eqref{B}, we have that 
$$
B_0\,=\, B_{\mathbb F}\oplus B_{\mathcal S}\, .
$$
Moreover, from Corollary \ref{cor2}, the connection $D^{\mathbb F}$ (respectively, 
$D^{\mathcal S}$) on $\mathbb F$ (respectively, $\mathcal S$) preserves the bilinear form 
$B_{\mathbb F}$ (respectively, $B_{\mathcal S}$). The vector bundle $\mathbb F$ has a 
filtration of holomorphic subbundles (see \eqref{e7}), which we shall denote by $\widetilde{\mathcal 
F}_\bullet$. Recall that the isomorphism $f'_{2m}$ in \eqref{ch2} takes the
filtration $\widetilde{\mathcal F}_\bullet$ to the filtration of $J^{2m}(Q)$ given by the
exact sequences in \eqref{js}.

Note that $({\mathbb F},\, B_{\mathbb F},\, \widetilde{\mathcal F}_\bullet,\, D^{\mathbb F})$ satisfies all the conditions
needed to define a generalized $B$--oper (see \cite[Definition 2.11]{BSY}) except possibly the only condition
$$
\det {\mathbb F}\,=\, {\mathcal O}_X\, .
$$
In any case,
\begin{equation}\label{dt}
{\mathcal L}\, :=\, \det {\mathbb F}\, \in\, J(X)_2\, .
\end{equation}
For the vector bundle ${\mathbb F}'\, :=\, {\mathbb F}\otimes {\mathcal L}$, we have
$\det {\mathbb F}'\,=\, {\mathcal O}_X$.

Let $\rho\, :\, {\mathcal L}^{\otimes 2}
\, \longrightarrow\, {\mathcal O}_X$ be an isomorphism (as in \eqref{rho}). Define
the nondegenerate symmetric bilinear form
$$
B'_{\mathbb F}\, :=\, B_{\mathbb F}\otimes\rho
$$
on ${\mathbb F}'\, =\, {\mathbb F}\otimes {\mathcal L}$. Tensoring the above filtration $\widetilde{\mathcal
F}_\bullet$ of ${\mathbb F}$ by $\mathcal L$ we get a filtration of holomorphic subbundles of
${\mathbb F}'$; this filtration of ${\mathbb F}'$ will be denoted by $\widetilde{\mathcal
F}'_\bullet$. The holomorphic connection $D^{\mathbb F}$ on $\mathcal F$ and
the canonical connection $D^{\mathcal L}$ on ${\mathcal L}$ in \eqref{dl} together define a holomorphic
connection
$$
D'_{\mathbb F}\, :=\, D^{\mathbb F}\otimes\text{Id}_{\mathcal L}+ \text{Id}_{\mathbb F}\otimes D^{\mathcal L}
$$
on ${\mathbb F}'$ (as done in \eqref{dl2}).

Now $({\mathbb F}',\, B'_{\mathbb F},\, \widetilde{\mathcal F}'_\bullet,\, D'_{\mathbb F})$
is a generalized $B$--oper \cite{BSY}. Therefore, from \cite[Theorem 4.6]{BSY}
we obtain a triple
\begin{equation}\label{t}
(\alpha,\, \beta,\, \delta)\, \in\, {\mathcal C}_X\times
{\mathfrak P}(X)\times \left(\bigoplus_{i=2}^m H^0(X,\, K^{\otimes 2i}_X)\right)
\end{equation}
associated to $({\mathbb F}',\, B'_{\mathbb F},\, \widetilde{\mathcal F}'_\bullet,\, D'_{\mathbb F})$.

Next consider the second fundamental form for the subbundle
${\mathcal S}\, \subset\, E\, \,=\, {\mathbb F}\oplus{\mathcal S}$ for the connection $D$
on $E$. Let
$$
S(D;{\mathcal S})\, \in\, H^0(X,\, \text{Hom}({\mathcal S},\, {\mathbb F})\otimes K_X)
$$
be the second fundamental form for the subbundle $\mathcal S$ for the connection $D$ on
$E$. From Definition \ref{def4} and \eqref{sf} we know that
$$
S(D;{\mathcal S})\, \in\,H^0(X,\, \text{Hom}({\mathcal S},\, F_1)\otimes K_X)
\,=\, H^0(X,\, \text{Hom}({\mathcal S},\, Q\otimes K^{\otimes (2m+1)}_X))
$$
$$
\subset\, H^0(X,\, \text{Hom}({\mathcal S},\, {\mathbb F})\otimes K_X)\, ;
$$
we note that $F_1\,=\, Q\otimes K^{\otimes 2m}_X$; this follows from the fact that 
isomorphism $f'_{2m}$ in \eqref{ch2} takes the filtration ${\mathcal F}'_\bullet$ to the 
filtration of $J^{2m}(Q)$ given by the exact sequences in \eqref{js}. Since
${\mathcal S}\,=\, Q\otimes K^{\otimes m}_X$ (see Definition \ref{def4}), we have
$$
S(D;{\mathcal S})\, \in\, H^0(X,\, \text{End}({\mathcal S}) \otimes K^{\otimes (m+1)}_X)\, .
$$
We recall from Definition \ref{def4} that 
$
S(D;{\mathcal S})\,=\, \text{Id}_{\mathcal S}\otimes \phi\, ,
$
where $\phi\, \in\, H^0(X,\, K^{\otimes (m+1)}_X)$.
Then, we have
$$
(\alpha,\, \beta,\, \phi,\, \delta,\, {\mathcal L})\, \in\, 
{\mathcal C}_X\times {\mathfrak P}(X)\times \left(H^0(X,\, K^{\otimes (m+1)}_X)\oplus
\left(\bigoplus_{i=2}^m H^0(X,\, K^{\otimes 2i}_X)\right)\right)\times J(X)_2\ ,
$$
where $(\alpha,\, \beta,\, \delta)$ is constructed in \eqref{t} and
${\mathcal L}$ is the line bundle in \eqref{dt}.
It is straightforward to check that the two constructions are inverses of each other.

When the integer $r$ is even, the above reverse construction is simpler because in that case
$({\mathbb F},\, B_{\mathbb F},\, \widetilde{\mathcal F}_\bullet,\, D^{\mathbb F})$ is already
a generalized $B$--oper, so the construction of
$({\mathbb F}',\, B'_{\mathbb F},\, \widetilde{\mathcal F}'_\bullet,\, D'_{\mathbb F})$ from it is not
needed.
\end{proof}

\section*{Acknowledgements}

IB is supported by a J. C. Bose Fellowship. LPS is partially supported by the
NSF CAREER Award DMS-1749013.

%%%%%%%%%%%%%%%%%%%%%%%%%%%%%%%%%%%%%%%%%%%%%%%%%%%%%%%%%%%%%%%%

\end{document}